\documentclass[11pt,a4paper]{article}
\usepackage{graphicx}
\usepackage[utf8]{inputenc}
\usepackage[T1]{fontenc}
\usepackage{amsmath}
\usepackage{amsthm}
\usepackage{amsfonts}
\usepackage{amssymb}
\usepackage{enumitem}

\usepackage{natbib}

\usepackage{url}
\usepackage{slashed}
\usepackage{bbm}
\usepackage{tikz-cd}
\usepackage{mathtools}
\usepackage{esint}

\usepackage{ifthen}
\usepackage{xspace}
\usepackage{fancyhdr}
\usepackage{aliascnt}
\usepackage[textsize=small]{todonotes}
\usepackage{bookmark}
\usepackage{caption}
\usepackage{leftidx}

\usetikzlibrary{calc}
\usepackage{cancel}

\usepackage[margin=1in]{geometry}
\usepackage[T1]{fontenc}
\usepackage{ae,aecompl}
\usepackage{bm}

\usepackage{microtype}

\usepackage{setspace}
\tikzset{curve/.style={settings={#1},to path={(\tikztostart)
    .. controls ($(\tikztostart)!\pv{pos}!(\tikztotarget)!\pv{height}!270:(\tikztotarget)$)
    and ($(\tikztostart)!1-\pv{pos}!(\tikztotarget)!\pv{height}!270:(\tikztotarget)$)
    .. (\tikztotarget)\tikztonodes}},
    settings/.code={\tikzset{quiver/.cd,#1}
        \def\pv##1{\pgfkeysvalueof{/tikz/quiver/##1}}},
    quiver/.cd,pos/.initial=0.35,height/.initial=0}

\newtheorem{theorem}{Theorem}[section]
\newtheorem{prop}[theorem]{Proposition}
\newtheorem{lemma}[theorem]{Lemma}
\newtheorem{cor}[theorem]{Corollary}

\theoremstyle{definition}
\newtheorem{definition}[theorem]{Definition}

\newtheorem{rem}[theorem]{Remark}

\newtheorem{ingredient}{Theorem}

\newcommand{\loc}{\textnormal{loc}}
\newcommand{\sing}{\textnormal{sing}}
\newcommand{\supp}{\textnormal{supp }}
\newcommand{\End}{\textnormal{End}}

\newcommand{\pr}{\textnormal{pr}}

\newcommand{\im}{\textnormal{im\,}}

\newcommand{\vol}{\textnormal{vol}}

\newcommand{\id}{\textnormal{Id}}
\newcommand{\coker}{\textnormal{coker\,}}

\newcommand{\Conf}{\textnormal{Conf}}
\newcommand{\Rat}{\textnormal{Rat}}

\DeclareMathOperator{\ind}{ind}

\newcommand{\eand}{\quad \text{and} \quad}

\newcommand{\fF}{\mathfrak{F}}

\newcommand{\fp}{\mathfrak{p}}

\newcommand{\cE}{\mathcal{E}}
\newcommand{\cC}{\mathcal{C}}
\newcommand{\cD}{\mathcal{D}}
\newcommand{\cH}{\mathcal{H}}
\newcommand{\cF}{\mathcal{F}}
\newcommand{\cI}{\mathcal{I}}
\newcommand{\cP}{\mathcal{P}}
\newcommand{\cR}{\mathcal{R}}
\newcommand{\cM}{\mathcal{M}}
\newcommand{\cN}{\mathcal{N}}

\newcommand{\cO}{\mathcal{O}}
\newcommand{\cB}{\mathcal{B}}

\newcommand{\cV}{\mathcal{V}}

\newcommand{\cZ}{\mathcal{Z}}

\newcommand{\bN}{\mathbb{N}}

\newcommand{\bR}{\mathbb{R}}
\newcommand{\bC}{\mathbb{C}}

\newcommand{\bP}{\mathbb{P}}

\newcommand{\bx}{\mathbf{x}}

\newcommand{\bev}{\mathbf{ev}}
\newcommand{\ev}{\mathrm{ev}}

\newcommand{\Comment}[2][\empty]{\ifthenelse{\equal{#1}{\empty}}{\todo[color=gray!10]{#2}\ }{\todo[color=gray!10,#1]{#2}}}

\usepackage{hyperref}

\title{On bubbling of Fueter maps}
\author{Jacek Rzemieniecki\thanks{Humboldt Universit\"at zu Berlin. Email: \texttt{jacek.rzemieniecki[]hu-berlin.de}}
}
\date{August 27, 2026}

\begin{document}

\maketitle\vspace{-2em}
\begin{abstract}

We study bubbling for sequences of solutions of the Hamiltonian-perturbed Fueter equation. By Walpuski's compactness theorem, a bounded-energy sequence converges weakly to a Fueter map, with energy loss recorded by a defect measure supported on a codimension-two rectifiable set. Assuming that the limiting map has no non-removable singularities, the bubbling locus contains a nontrivial Lipschitz arc and the bubbles attach to the limiting map along this arc, we show that the limiting Hamiltonian lies in an exceptional subset of infinite codimension. The proof combines quantitative control of rational sweepout loci in hyperkähler manifolds with a transversality argument. We expect both the Lipschitz-arc and attachment assumptions to be automatic. This strongly suggests that, in the absence of non-removable singularities, bubbling of Hamiltonian-perturbed Fueter maps is highly nongeneric.
\end{abstract}

\section{Introduction}

Let $(M, g_M)$ be a closed oriented Riemannian $3$--manifold and let $(X, g_X, I_1, I_2, I_3)$ be a hyperk\"ahler manifold. Fix a fiberwise isometric identification $\cI: STM \to S(X)$, where $S(X)\coloneq \{ a_1 I_1+ a_2 I_2 + a_3 I_3 \, | \, (a_1, a_2, a_3) \in S^2\}$ denotes the twistor sphere of $X$. Consider the following parameter space $\cP \coloneq C^\infty(M \times X, \bR)$. These data, together with a parameter $H \in \cP$ define a \textbf{Fueter operator} $\cF_{H}$ acting on functions $f\in C^\infty(M, X)$ as follows
\begin{equation}\label{perturbed Fueter equation}
    \cF_{H} (f) \coloneq \sum_{a=1}^3 \cI(e_a) Tf(e_a) - \nabla^{g_X} H(f) \in \Gamma(f^*TX),
\end{equation} 
where $\{e_a\}$ is an orthonormal basis of $TM$ at a point and $\nabla^{g_X} H$ denotes the gradient of $H$ in the direction of $X$. Notice that this does not depend on the choice of orthonormal basis. The solutions $f$ of $\cF_{H} (f) = 0$ are called \textbf{Fueter maps}. Sometimes we will write $H$--Fueter maps to stress the dependence on the parameter.

The Fueter equation in the modern formulation was originally introduced by Taubes in \cite{Ta99} and it appears naturally in the study of gauge theory in higher dimensions \cite{DS11}, \cite{Ha12}, \cite{HL24}, \cite{Wa17a}, \cite{Wa17b} and generalized Seiberg--Witten equations \cite{Do19}, \cite{DW20}, \cite{Ha15}, \cite{HW15}, \cite{Wi26} and plays a prominent role in the recent proposal of Doan--Rezchikov \cite{DR22} to categorify invariants from symplectic topology. 

One of the early motivations for studying Fueter maps is the work of Hohloch--Noetzel--Salamon \cite{HNS09}, \cite{Sa13} who introduce an action functional whose critical points are Hamiltonian-perturbed Fueter maps and develop a Floer theory in the very restricted case of a compact flat target $X$. The main difficulty in extending this theory to more general targets such as, for example, $K3$ surfaces, lies in the possible non-compactness phenomena for sequences of solutions to the Fueter equation.

The following results highlight what is known about the compactness problem for Fueter maps and provide a summary of the works of Walpuski \cite{W15} and Bellettini--Tian \cite{BT19} which should be viewed as analogues to the foundational results in the compactness theory of stationary harmonic maps of Lin \cite{Li99} and Lin--Rivieré \cite{LR02}, respectively. Suppose that $X$ is compact and $(H_i)$ is a sequence of Hamiltonians which converges to $H \in \cP$. Let $(f_i)$ be a sequence of solutions of the $H_i$--perturbed Fueter equation satisfying a uniform energy bound $\int_{Y} |T f_i|^2 \leq c_E$. Then, after possibly passing to a subsequence, one can conclude the following:
\begin{itemize}
  \item There is a closed subset $S \subset M$ and a $H$--Fueter map $f \in C^\infty(M \setminus S, X)$ such that $f_i |_{M\setminus S} \to f$ in $C^\infty_\loc$. Moreover, there is a constant $\epsilon_0 > 0$ and an upper semi-continuous function $\Theta: S \to [\epsilon_0,\infty)$ such that $|T f_i|^2 \,\cH^3 \rightharpoonup |T f|^2 \,\cH^3 + \Theta \,\cH^1\lfloor S$ as Radon measures.
  \item The set $S$ decomposes as $S = \Gamma \cup \sing(f)$ into (possibly intersecting) subsets such that $\Gamma \coloneq \supp(\Theta \,\cH^1\lfloor S)$ is $\cH^1$--rectifiable and $\cH^1(\sing(f))=0$. The map $f$ extends smoothly\footnote{This is not stated in the main theorem but explained in \cite[Fact (c), pp. 1296]{BT19}.} away from the locus of nonremovable singularities $\sing(f)$.
  \item For $\cH^1$-almost every $x \in \Gamma$, there exist a positive integer $n_x$ and non-constant holomorphic spheres $u_j : (S^2, i) \to (X,-\cI(v))$ with $v$ a unit vector spanning the approximate tangent line $T_x\Gamma$ and $j=1, \dots, n_x$ such that the energy is quantized over the whole bubbling tree
  \begin{equation}\label{energy quantization}
    \Theta(x) = \sum_{j=1}^{n_x} E(u_j).
  \end{equation}
\end{itemize}

The purpose of this article is to investigate the genericity of such bubbling in the absence of nonremovable singularities. The following is our main result. 
\begin{theorem}\label{main theorem}
  Let $(M, g_M)$ be a closed oriented Riemannian $3$--manifold and let $(X, g_X, I_1, I_2, I_3)$ be a closed hyperk\"ahler manifold. Suppose that $(H_i)$ is a sequence of parameters in $\cP$ converging to $H \in \cP$. Let $(f_i)$ be a sequence of solutions of the $H_i$--perturbed Fueter equation satisfying a uniform energy bound. As explained above, after passing to a subsequence, we can extract a limiting $H$--Fueter map $f$ and a bubbling locus $\Gamma$. Assume that
  \begin{enumerate}[
  label=\textnormal{(\roman*)},
  topsep=0pt,
  partopsep=0pt
  ]
    \item $f$ extends smoothly across the whole of $M$; 
    \item $\Gamma$ contains a nontrivial Lipschitz arc $A$;
    \item for $\cH^1$-almost every $x \in A$, the point $f(x)$ lies in the image of a bubble at $x$.
  \end{enumerate} 
  Then $H \in \cP^\times$, where $\cP^\times$ is an infinite-codimensional\footnote{By this we mean that for any compact finite-dimensional smooth manifold $K$, the set $\{ \fp \in C^\infty(K, \cP) \, | \, \fp(t) \notin \cP^\times \; \text{ for all } t\in K \}$ is residual in $C^\infty(K, \cP)$ endowed with its standard Fréchet topology.} subset of $\cP$.
\end{theorem}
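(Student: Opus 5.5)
The plan is to define $\cP^\times$ explicitly as the set of $H$ for which some smooth $H$--Fueter map $f\colon M\to X$ has the following property: along a nontrivial Lipschitz arc $A\subset M$, for $\cH^1$-a.e.\ $x\in A$, the value $f(x)$ lies in the \emph{rational sweepout locus} $Z_{v(x)}\subset X$ of the complex structure $-\cI(v(x))$, where $v(x)$ spans $T_xA$. Here $Z_{\zeta}$, for $\zeta$ in the twistor sphere, denotes the union of images of nonconstant $\zeta$-holomorphic spheres of energy at most $c_E$. Hypothesis (i) makes $f$ a genuine smooth Fueter map. By Walpuski's and Bellettini--Tian's results, hypothesis (ii) yields bubbles at $\cH^1$-a.e.\ $x\in A$ that are holomorphic for $-\cI(v)$, with $v$ the tangent line to $A$ (the approximate tangent of $\Gamma$ agrees with $T_xA$ a.e.). Hypothesis (iii) then gives $f(x)\in Z_{v(x)}$. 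So the limit $H$ lies in $\cP^\times$ as defined, and everything reduces to proving that $\cP^\times$ has infinite codimension in the sense of the footnote.

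The first key step is \emph{quantitative control of sweepout loci}. Hyperkähler manifolds carry no holomorphic spheres for generic twistor directions: a $\zeta$-holomorphic sphere has $\int u^*\omega_{\zeta'}=0$ for $\zeta'\perp\zeta$, so its class $\beta$ must satisfy $[\omega_\zeta]$-orthogonality constraints. With energy bounded by $c_E$, Gromov compactness gives finitely many classes. The set of $\zeta\in S^2$ admitting spheres of class $\beta$ is then a great circle's worth of conditions, namely the points with $\langle[\omega_{\zeta'}],\beta\rangle=0$ for all $\zeta'\perp\zeta$. Generically this forces $\zeta$ to be one of two antipodal points, or a whole circle in degenerate cases. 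Setting $\cZ=\{(\zeta,p): p\in Z_\zeta\}\subset S^2\times X$, I would show that $\cZ$ is a closed set contained in a countable (indeed finite) union of Lipschitz images of sets of dimension at most $\dim X-2+1$. The fibers are unions of images of $\zeta$-holomorphic spheres: the moduli space modulo reparametrization has real dimension $\dim X-4$ in the hyperkähler twistor-family count, and the evaluation map adds $2$. Together with the at most $1$-dimensional set of admissible $\zeta$, $\cZ$ therefore has Hausdorff codimension at least $2$ (or $3$) in $S^2\times X$, i.e.\ the set is effectively of dimension $\le \dim X-1$.

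The second step is \emph{transversality}. Consider the universal moduli space of pairs $(f,\fp(t))$ with $\cF_{\fp(t)}f=0$, parametrized over $K$. This is a Banach manifold, because the Fueter operator is elliptic and varying $H$ surjects onto the cokernel by the usual unique continuation argument. Add the data of an arc $A$ and impose the condition that the lift $x\mapsto (v(x),f(x))\in S^1TM\times X$ hits $\cZ$ along $A$. To get infinite codimension, replace the single incidence by $k$-fold jet conditions: the arc is $1$-dimensional, but the condition must hold along a whole positive-measure subset of $A$. At a density point one obtains tangency conditions of arbitrary order, since a Lipschitz curve lying a.e.\ in a codimension-$\geq 2$ set forces higher-order contact of $f$'s jets with the stratified set. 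By a Sard--Smale argument applied to jet evaluation maps $J^k$, for each $k$ the set of bad parameters in $C^\infty(K,\cP)$ is contained in a countable union of closed nowhere dense sets once $k$ exceeds roughly $\dim K+3$. Intersecting over $k$ gives residuality.

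The main obstacle is the first step combined with its use in the second. The loci $Z_\zeta$ are not smooth, and multiply-covered or nodal spheres spoil the naive dimension count. I would handle this by working with simple spheres only, which have the same images, and by using the Gromov-compactness structure to write $\cZ$ as a countable union of images of smooth maps from manifolds of controlled dimension. Transversality would then be applied to each such map separately. Converting "a.e.\ on a Lipschitz arc" into a jet-level condition also needs care: I would take a point of $A$ that is simultaneously a density point, a point of differentiability, and a Lebesgue point of the preimage in each smooth chart of $\cZ$, and derive the contact conditions there.
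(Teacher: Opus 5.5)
\textbf{Gap 1: the jet mechanism for infinite codimension.} This is the main problem. You claim that at a density point of the good set you obtain tangency conditions of arbitrary order. A Lipschitz arc carries only first-order information. Let $\gamma$ parametrize $A$ by arclength, and let $E\subset A$ be the positive-measure set on which $f(x)$ lies in a fixed codimension-two piece $Y$ of the sweepout locus. The normal component of $f\circ\gamma$ with respect to $Y$ is merely Lipschitz. At a density point of $\gamma^{-1}(E)$ where it is differentiable, you get exactly $f(x)\in Y$ and $T_xf(v)\in T_{f(x)}Y$. There are no second derivatives to work with.

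Even for a smooth arc, the $k$-th order contact conditions involve the $k$-jet of the arc, which is free data. They would constrain $f$ only if the arc agreed to order $k$ with the integral curve of the fixed field $v$. Density of $E$ controls $\gamma$ only up to an error of the size of the complement of $\gamma^{-1}(E)$ near the point, which is $o(r)$ and nothing better. So your Sard--Smale count over $J^k$ has nothing to act on.

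The missing idea is to trade order for multiplicity. $E$ contains infinitely many such density points. Each imposes four real first-order conditions against the three degrees of freedom of the point. Hence $n$ distinct points give a Fredholm problem of index $-n$ over $\cP^m$; this is Theorem \ref{thm:B}. Its transversality uses localized variations near distinct points and the density of $h\mapsto\nabla h(\cdot,f(\cdot))$, and it never treats the arc itself as a variable. Intersecting over $n$ and using generic avoidance (Theorem \ref{thm:C}) then gives infinite codimension. This formulation needs two inputs. First, $v$ must be a fixed smooth vector field $\cI^{-1}(I_j)$; countability of the admissible twistor directions supplies this, either from your energy/homology-class argument or from Verbitsky's theorem. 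Second, $Y$ must be a fixed real codimension-two submanifold.

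\textbf{Gap 2: the dimension count for the sweepout locus.} The second input exposes a gap in your first step. The hyperk\"ahler structure is fixed, so there is no freedom to perturb $J$. Moduli spaces of simple $\zeta$-holomorphic spheres are in general not cut out transversally, even in the twistor family. The number $\dim X-4$ is only a virtual dimension. It gives no upper bound on the actual moduli space or on the locus it sweeps out. So it does not show that $\Rat(X,I)$ has real codimension at least two, and restricting to simple spheres does not produce ``smooth maps from manifolds of controlled dimension.''

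What is needed is a reason why, for fixed $I$, rational curves cannot cover an open subset of $X$. The paper gets this from non-uniruledness of compact K\"ahler manifolds with trivial canonical bundle: pulling back a holomorphic volume form to a covering family of rational curves would give a nonzero section of $\cO(-2)$ on a general $\bP^1$-fiber. This is combined with compactness of components of the Barlet space and Zariski-regularity of the rational locus. Together these place $\Rat(X,I)$ in countably many proper analytic subsets, which are then stratified (and thickened where necessary) into real codimension-two submanifolds $Y_k$.
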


\begin{rem}
  Most of our arguments are local in nature and our proof should adapt to the setup of Fueter sections with perturbations as defined by Walpuski \cite[pp. 753]{W15}. We opted to work with Fueter maps perturbed by a Hamiltonian with the application \cite{HNS09} in mind and in order to streamline the exposition by avoiding the bookkeeping of the varying fiber data. Furthermore, we expect that a slightly more careful use of the Barlet space in Section \ref{Rational curves in hyperkähler manifolds} would allow us to relax the closedness condition for $X$ and instead consider a sequence of $f_i$ whose images lie in a compact subset of $X$. 
\end{rem}

The following remarks are in order regarding the assumptions in Theorem \ref{main theorem}.

\begin{rem}
  Little is known about singularity formation for Fueter maps. In particular, the following model problem is of fundamental importance: is there a Fueter map from a punctured Euclidean ball $f: B_1(0) \setminus \{0\} \to X$ whose tangent map at $0$ is pure bubbling? In other words, could it happen that there is a sequence of rescalings $f_{r_l}: B_{1/r_l}(0) \setminus \{0\} \to X$ of $f$ with $r_l\searrow 0$ such that $f_{r_l}$ converge in $C^\infty_{\loc}$ to a constant map but as measures $|Tf_{r_l}|^2 \cH^3 \rightharpoonup \nu$, where $\nu$ is a nontrivial dilation--invariant measure supported on a union of finitely many rays through the origin? A negative answer would rule out pure bubbling as a mechanism for singularity formation in limits of Fueter maps.
  
  It is important to point out that similar phenomena occur in other problems in geometric analysis: notably, Chen--Sun \cite[Corollary 1.4]{ChenSun21Analytic} show that there exists an admissible Hermitian Yang--Mills connection on a rank-$2$ reflexive sheaf over $\bC P^3$ such that at every singular point, the analytic tangent cone has a trivial flat connection but nonempty bubbling sets. Even if something similar occurs in our context, it could still be true that Fueter maps with point singularities have a good-enough deformation theory to allow a variation of Theorem \ref{main theorem} to hold. 
  
  Lastly, we remark that in the event that $X$ admits no rational curves, the situation is much better and Esfahani \cite{Es23} observes that there are no non-removable singularities. See also Asselle--Brilleslijper \cite{AB26} for a recent compactness result for Fueter equations arising in a Floer-theoretic setting in the absence of rational curves.
\end{rem}

\begin{rem}
  We believe that the Lipschitz arc condition in Theorem \ref{main theorem} is mild and perhaps even automatic. To indicate a possible approach, suppose that $\dim_{\bR} X = 4$ and the bubbles are represented by simple immersed rational curves. Their normal bundles are then isomorphic to $\cO(-2)$, and hence infinitesimally rigid. 

  Now, suppose that we have a sequence of bounded energy Fueter maps from a unit Euclidean ball $f_k : B_1(0) \to X$. Consider the analytic defect from bubbling in the direction of $v\in S^2$ at $x$ at the scale $r$
  \begin{equation*}
    A_k(x,r;v) \coloneq \frac{1}{r}\int_{B_r(x)}
    \big(|\partial_v f_k|^2 + |\partial_{v_2}f_k + \cI(v)\partial_{v_3}f_k|^2 \big)\, d\cH^3,
  \end{equation*}
  where $(v, v_2, v_3)$ is an oriented orthonormal frame. This leads to the following notion of excess
  \begin{equation*}
    \cE_k(x, r) \coloneq \inf_{|v|=1} A_k(x, r; v).
  \end{equation*}
  We expect that the rigidity of the bubbling models should allow one to prove a gap or excess-decay lemma in the spirit of Simon \cite[Lemma 1 of Section 1]{Si93}. Combined with an Allard-type iteration, this should yield regularity of the bubbling locus and, in particular, the existence of Lipschitz arcs whenever the defect measure is nonzero. We intend to pursue this idea elsewhere.
\end{rem}

\begin{rem}
  We expect the attachment assumption to be automatic. A stronger no-neck statement would assert that at almost every bubbling point $x$ the union of $\{f(x)\}$ with the images of all bubbles arising at $x$ is connected. This is suggested by the fact that, after rescaling in the directions normal to the bubbling locus, the Fueter equation converges to the Cauchy-Riemann equation and the bubbles are holomorphic spheres. Bellettini--Tian's analysis shows that no Dirichlet energy remains on the neck regions, but this alone does not exclude a slow drift in the target along a rescaled cylinder whose length tends to infinity. An additional oscillation or endpoint estimate appears to be needed to upgrade the delicate Lorentz-space analysis in \cite[Section 5]{BT19} to a diameter-collapse result. For the argument below, however, only the weaker attachment property in (iii) is required.
\end{rem}
\textbf{Acknowledgments:} The author wishes to thank Thomas Walpuski for insightful discussions and careful proofreading and comments on a draft of this article and Aleksander Doan for his insistence that bubbling of Fueter maps is in the end perhaps an infinite codimension phenomenon. This work was supported by the Deutsche Forschungsgemeinschaft (DFG, German Research Foundation) under Germany's Excellence Strategy through the Berlin Mathematics Research Center MATH+ (EXC-2046/2, project ID: 390685689). 

\section{Proof of the main theorem}\label{Proof of the main theorem}

The purpose of this section is to explain that the main theorem is a consequence of the following three ingredients; the rest of the paper will be dedicated to their proof. We need the following notion.

\begin{definition}
    For a complex manifold $(X, I)$, let 
    \begin{equation*}
        \Rat(X, I) \coloneq \{ x \in X \; | \; x\in C \; \text{for some irreducible rational curve } C\} 
    \end{equation*}
    denote the \textbf{locus of rational curves} on $(X, I)$.
\end{definition}

The first result quantifies the sparsity of rational curves in a hyperkähler manifold.

\begin{ingredient}\label{thm:A}
    Let $(X, g_X, I_1, I_2, I_3)$ be a closed hyperkähler manifold. Then, there exists a countable subset $\{ I_j \}_{j \in \bN}$ and a countable collection $\{Y_k\}_{k \in \bN}$ of connected real codimension-two submanifolds of $X$ such that
    \begin{equation*}
       \Rat(X, I)=\emptyset \quad \text{for } I \in S(X) \setminus  \{ I_j \}
    \end{equation*}
    and 
    \begin{equation*}
        \bigcup_{I \in S(X)} \Rat(X, I) \subset \bigcup_{k} Y_k.
    \end{equation*}
\end{ingredient}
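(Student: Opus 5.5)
Here is the plan. First show that rational curves occur only for countably many complex structures. Then cover their loci, across all those structures, by countably many codimension-two submanifolds.

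\textbf{Countably many complex structures.} Let $C$ be an irreducible rational curve in $(X,I)$ with normalization $u:\bP^1\to X$ and class $\beta=[u]\in H_2(X,\bZ)$. For $I=a_1I_1+a_2I_2+a_3I_3$, the Kähler form is $\omega_I=\sum a_i\omega_i$. Since $C$ is $I$-holomorphic, $\int_C\omega_I=\mathrm{Area}(C)>0$. The holomorphic symplectic form $\sigma_I$ restricts to zero on $C$, and $\sigma_I$ is a complex combination of the two $\omega_J$ with $J\perp I$. Hence $\int_\beta\omega_J=0$ for all $J\in S(X)$ with $J\perp I$.

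It follows that the vector $(\langle[\omega_1],\beta\rangle,\langle[\omega_2],\beta\rangle,\langle[\omega_3],\beta\rangle)\in\bR^3$ is a positive multiple of $(a_1,a_2,a_3)$. So $I$ is determined by $\beta$, namely $I=I_\beta$. Since $H_2(X,\bZ)$ is countable, this gives the countable set $\{I_j\}$, and $\Rat(X,I)=\emptyset$ for every other $I$.

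\textbf{Covering the loci.} Fix $\beta$ and $I=I_\beta$. Consider the Barlet space (equivalently, the Douady space) of effective $1$-cycles of $(X,I)$ in class $\beta$, or rather the closure of the part parametrizing irreducible rational curves. Every point of it has area $\langle[\omega_I],\beta\rangle$. By Bishop's theorem this space is compact, and it has finitely many irreducible components $B_{\beta,l}$. Each $B_{\beta,l}$ is a compact analytic space.

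Over each component there is a universal family $Z_{\beta,l}\subset B_{\beta,l}\times X$, and its image $W_{\beta,l}=\pr_X(Z_{\beta,l})$ is a closed analytic subvariety of $(X,I)$ by Remmert's proper mapping theorem. Then $\Rat(X,I)\subset\bigcup_l W_{\beta,l}$.

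The key claim is that $W_{\beta,l}\neq X$, so that $\dim_\bC W_{\beta,l}\le n-1$ where $\dim_\bC X=n$. I would argue this with the holomorphic symplectic form. Suppose a family of rational curves dominated $X$. A general member $C$ would then have a generically generated normal sheaf, so $N_C$ would be globally generated at a general point. But $\sigma_I$ gives the quotient map $TX|_C\to T_C^{\perp\sigma}{}^*$, whose target is $N_C^*$ restricted to the $\sigma$-orthogonal. More concretely, $\sigma_I$ identifies $N_C\supset T_C^{\perp\sigma}/T_C$ and yields a nonzero map $N_C\to \Omega_C=\cO(-2)$. A globally generated $N_C$ admits no nonzero map to $\cO(-2)$. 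This contradiction shows the family cannot be dominant.

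Alternatively, a dominating family of rational curves would make $X$ uniruled, and $K_X$ trivial contradicts uniruledness by Miyaoka–Mori and Boucksom–Demailly–Păun–Peternell; but $X$ need not be projective, so I prefer the $\sigma$-argument, which is purely local along a general curve.

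\textbf{From analytic sets to submanifolds.} A proper analytic subvariety $W$ of complex codimension $\ge1$ admits a stratification into countably many (indeed finitely many) connected complex submanifolds, each of real codimension $\ge2$. Lower-dimensional strata are contained in thickened codimension-two pieces. So that every $Y_k$ is exactly of codimension two, I would enlarge each stratum $S$ of codimension greater than two to a locally closed real codimension-two submanifold containing it. For example, take a tubular neighbourhood of $S$ and a suitable sub-bundle of its normal bundle, covered by countably many charts. Countable unions over $\beta$, $l$ and the strata give the family $\{Y_k\}$.

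\textbf{Main obstacle.} The delicate step is the non-dominance statement $W_{\beta,l}\neq X$ in the possibly non-Kähler-generic, non-projective setting. This requires handling the general member properly: passing to a reduced component and using generic smoothness of the family to get generic generation of the normal sheaf. A second, more technical point is the use of the Barlet space to obtain finitely many compact components. Here closedness of $X$ and the area bound are essential.
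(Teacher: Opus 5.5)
Your plan is mostly sound, and in two places it takes a different and arguably cleaner route than the paper. However, one step that the rest of the argument depends on is asserted without proof.

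\textbf{Where you differ from the paper.} Your countability argument is correct and more direct than the paper's appeal to Verbitsky's theorem, which goes through Wirtinger's equality case and trianalyticity. Since $\omega_J|_C=0$ for $J\perp I$ and $\int_C\omega_I>0$, the period vector $p_\beta=(\langle[\omega_1],\beta\rangle,\langle[\omega_2],\beta\rangle,\langle[\omega_3],\beta\rangle)$ is a positive multiple of $(a_1,a_2,a_3)$.

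Your $\sigma_I$-argument for non-dominance also works once written out on a normalized family $F\colon\bP^1\times B'\to X$, $F(p,b)=u_b(p)$, over a Zariski-open part of the base:
\begin{itemize}
\item at a general point $dF$ is onto, so the holomorphic sections $\partial_b u_b$ of $u_b^*TX$ span $T_{u_b(p)}X$ modulo $du_b(T_p\bP^1)$;
\item but $\sigma_I(\partial_b u_b, du_b(\cdot))\in H^0(\bP^1,\Omega_{\bP^1})=0$;
\item so $du_b(T_p\bP^1)$ is $\sigma_I$-orthogonal to all of $T_{u_b(p)}X$ and must vanish, which is absurd at a general point.
\end{itemize}
The paper instead pulls a holomorphic volume form back to a ruling and restricts it to fibres, where $K_U|_{C_b}\cong\cO(-2)$. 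That uses only a nonzero section of $K_X$; your version avoids cutting the base down to dimension $n-1$. The implication you actually need (uniruled $\Rightarrow$ $H^0(K_X)=0$) is exactly this elementary one and holds for compact K\"ahler $X$. Miyaoka--Mori and BDPP concern the converse, so projectivity is not an issue.

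\textbf{The gap.} You assert that the closure of the locus $\cR_\beta\subset\cC_1(X)$ of irreducible reduced rational curves of class $\beta$ is analytic, with irreducible components $B_{\beta,l}$. Your dominance argument also implicitly needs that the \emph{general} member of each $B_{\beta,l}$ is an irreducible rational curve. Compactness of the cycle space gives neither; as the paper stresses, $\cR(X)$ is in general not a closed analytic subset.

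You cannot fall back on all cycles of class $\beta$ either. On a general elliptic K3 surface, the fibre class contains the nodal rational fibres. But the component of the cycle space containing them is the pencil of all fibres, which sweeps out $X$, so the claim $W_{\beta,l}\neq X$ fails.

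This is exactly what the paper's Zariski-regularity argument supplies:
\begin{itemize}
\item the weight $\ge 2$ locus $F_2$ is closed analytic;
\item on an irreducible $S\not\subset F_2$, normalizing the total space gives constant geometric genus on a Zariski-open dense $S^{\circ\circ}$;
\item hence $\cR(X)\cap S$ either contains the general point of $S$ or lies in a proper analytic subset;
\item Campana's lemma (a Noetherian induction) then covers $\cR(X)$ by countably many irreducible analytic $S_k$ whose general point is rational.
\end{itemize}
With that inserted, your remaining steps go through.
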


To set the notation for the second ingredient in the proof, we need a definition.

\begin{definition}\label{Setup}
    Let $(M, g_M)$ be a closed oriented Riemannian $3$--manifold and suppose that $(X, g_X, I_1, I_2, I_3)$ is a hyperk\"ahler manifold. Choose a fiberwise isometric identification
    \begin{align*}
        \cI: STM \to S(X).
    \end{align*} 
    Now, for $m \geq 2$ consider the space of \textbf{Hamiltonian perturbations}
    \begin{align*}
        \cP^m \coloneq C^m(M \times X, \bR).
    \end{align*}
    For any $H \in \cP^m$, denote by $\nabla^{g_X} H \in C^{m-1}\Gamma(M \times X, \pr_2^* TX)$ the gradient of $H$ in the direction of $X$. Now, this data yields the \textbf{perturbed Fueter operator} acting on maps $f: M \to X$ by
    \begin{align*}
        \cF_H f \coloneq \sum_{a=1}^3 \cI(e_a) Tf(e_a) - \nabla^{g_X} H (f),
    \end{align*}
    where $\{e_a \}$ is an orthonormal frame of $M$ at a point.

    For any positive integer $n$, define the \textbf{ordered configuration space of $n$ points on $M$} by
    \begin{equation*}
        \Conf_n (M) = \{(x_1, \dots, x_n) \in M^n \; | \; x_i \neq x_j \; \textnormal{for } i \neq j\}.
    \end{equation*}
\end{definition}

The second result is a transversality theorem for Fueter maps with incidence conditions.

\begin{ingredient}\label{thm:B}
    Let $(X, g_X, I_1, I_2, I_3)$ be a closed hyperk\"ahler manifold. Fix a real codimension $2$ submanifold $Z \subset X$ and a unit vector field $v \in \Gamma(\mathbb{S}TM)$. For $p>1$ and $m, k$ positive integers satisfying $k-3/p>2$ and $m \geq k+1$ the \textbf{universal space of Fueter maps with $n$ incidence conditions}
    \begin{equation*}
        \cN^m_n(v, Z) \coloneq
        \Bigg\{ (f, \bx, H ) \in W^{k, p}(M, X) \times \Conf_n (M) \times \cP^m \ \Bigg|\
        \begin{array}{l}
            \cF_H f = 0 \, \text{ and for } \bx = (x_1 , \dots, x_n)\\
            f(x_j) \in Z \text{ and } T_{x_j}f(v) \in T_{f(x_j)}Z \\
            \text{for all } j= 1, \dots, n
        \end{array}
        \Bigg\}.
    \end{equation*}
    is a $C^1$-Banach manifold. Moreover, the natural projection map
    \begin{equation*}
        \pi^m: \cN^m_n (v, Z) \to \cP^m: \quad (f, \bx, H ) \mapsto H
    \end{equation*}
    is Fredholm of index $-n$.
\end{ingredient}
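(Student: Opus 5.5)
The plan is to present $\cN^m_n(v,Z)$ as the zero set of a $C^1$ section of a Banach bundle, prove that this section is transverse to zero, and read off the index of $\pi^m$ from the linearization. Write $\cB^{k,p}\coloneq W^{k,p}(M,X)$. Since $k-3/p>2$, Sobolev embedding gives $W^{k,p}\hookrightarrow C^2$, so $\cB^{k,p}$ is a smooth Banach manifold and evaluating $f$ and $Tf$ at points is $C^1$ in $f$. Over $\cB^{k,p}\times\Conf_n(M)\times\cP^m$ consider the map
\begin{equation*}
  \Phi(f,\bx,H)\coloneq\Big(\cF_H f,\ \big(\nu_Z(f(x_j)),\ \pi_{N Z}(T_{x_j}f(v))\big)_{j=1}^n\Big).
\end{equation*}
The first component takes values in $W^{k-1,p}(M,f^*TX)$. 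For the second, fix a tubular neighbourhood of $Z$ and a local defining map $\nu_Z:U\to\bR^2$, so that $Z=\nu_Z^{-1}(0)$ locally; the incidence conditions then say that $\nu_Z(f(x_j))=0$ and that $d\nu_Z(T_{x_j}f(v))=0$. Taken together these are $4n$ real equations, i.e.\ a map into $\bR^{4n}$. Because $m\ge k+1$, the map $H\mapsto\nabla H(f)$ lands in $C^{m-1}\subset W^{k-1,p}$ and is $C^1$, and so $\Phi$ is $C^1$. The space $\cN^m_n(v,Z)$ is $\Phi^{-1}(0)$.

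\textbf{Fredholm property.} For fixed $H$ and $\bx$, the derivative of $\Phi$ in $f$ is the linearized Fueter operator $D_f$, a first-order elliptic Dirac-type operator $W^{k,p}\to W^{k-1,p}$ of index $0$ on a closed 3-manifold, together with $4n$ evaluation conditions. The $x_j$-variations contribute $3n$ further dimensions. The derivative restricted to $T\cB\oplus T\Conf_n$ is therefore Fredholm with index
\begin{equation*}
  0+3n-4n=-n.
\end{equation*}
By the standard Smale--Sard argument, once $d\Phi$ is shown to be surjective, $\cN$ is a $C^1$ Banach manifold and $\pi^m$ is Fredholm of the same index $-n$.

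\textbf{Surjectivity (the main obstacle).} I would carry this out in three steps.
\begin{enumerate}
  \item Let $\eta\in W^{1-k,p'}$ annihilate the image of $D_f+\delta H$-variations. Varying $H$ by $h\in C^m(M\times X)$ produces $-\nabla h(f)$. Near any point $y$ where $f$ is defined, I can choose $h(x,\xi)=\chi(x)\langle w(x),\xi\rangle$ in local coordinates. This realises an arbitrary smooth section supported away from the $x_j$, so $\eta$ must vanish away from the $x_j$.
  \item The cokernel element $(\eta,\lambda)$, with $\lambda\in\bR^{4n}$, is then forced to have $\eta$ a distribution supported at $\{x_j\}$ satisfying $D_f^*\eta=$ point-evaluation functionals. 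Elliptic regularity, or equally a direct check using the variations themselves, pins this down: $\eta$ is a combination of derivatives of $\delta$ of order at most $1$.
  \item To kill the remaining finite-dimensional obstruction I use the local freedom of $h$ at the points $x_j$ themselves. Choosing $h$ with $\nabla h(f(x_j))$ and its first jet prescribed allows me to hit the $\bR^{4n}$ component directly. Concretely, for $w\in T_{f(x_j)}X$ I would solve $D_f\xi=\nabla h(f)$ with $h$ concentrated near $(x_j,f(x_j))$. The delicate point is checking that the values $\xi(x_j)$ and the derivatives $\nabla_v\xi(x_j)$ can be prescribed independently modulo $TZ$. This requires a short local computation with the Dirac operator: in a small ball, any prescribed 1-jet at the centre is achieved by some solution of $D\xi=$ smooth compactly supported data, because $D$ is surjective on 1-jets.
\end{enumerate}
The real content is the point-jet surjectivity for points where the $x_j$ are distinct. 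Distinctness is why $\Conf_n$ is used: it lets me localise the perturbations near each $x_j$ separately.
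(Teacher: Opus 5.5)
Your architecture matches the paper's up to packaging. You append the $4n$ incidence equations as extra target components of a single map $\Phi$ into $W^{k-1,p}\times\bR^{4n}$. The paper instead first shows that the incidence locus $\cI_n\subset\cB\times\Conf_n(M)$ is a $C^1$ submanifold of codimension $4n$, and then cuts out the Fueter equation over $\cI_n\times\cP^m$. The index count $0+3n-4n=-n$ and the implicit-function conclusion are the same.

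The gap is in the surjectivity step, and you have put its difficulty in the wrong place. The incidence components of $\Phi$ do not depend on $H$, so no choice of $h$ can ``hit the $\bR^{4n}$ component directly''.

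Your constructive version of step 3 is not justified either. There you solve $D_f\xi=\nabla h(f)$ exactly with $h\in C^m$ localized near $(x_j,f(x_j))$. But $h\mapsto\nabla h(\cdot,f(\cdot))$ only has dense image in $W^{k-1,p}\Gamma(f^*TX)$, not full image: it lands in $C^{m-1}$, whereas $D_f\xi$ contains the term $\nabla_\xi\nabla H(\cdot,f)$, which is in general only $C^{m-2}$. So you cannot take a bump $\xi$ and read off $h$. A local statement about $1$-jets of solutions in a ball also does not produce a global $C^m$ Hamiltonian with $\nabla h(f)=D_f\xi$.

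Conversely, the point you call delicate is not delicate. Since $\xi$ ranges over all of $T_f\cB$ and no equation has to be solved, $\xi(x_j)$ and $\nabla_v\xi(x_j)$ can be prescribed independently by bump sections supported in disjoint balls around the $x_j$. This is the only place distinctness of the $x_j$ is used.

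The missing mechanism is ``finite-dimensional cokernel plus dense image''. In your annihilator language it goes as follows.
\begin{enumerate}
\item Suppose $(\eta,\lambda)$ kills the image of $d\Phi$. Testing against $(0,0,h)$ gives $\langle\eta,\nabla h(\cdot,f(\cdot))\rangle=0$ for \emph{all} $h$, not only those supported away from the $x_j$.
\item These gradients are dense in all of $W^{k-1,p}\Gamma(f^*TX)$ (Lemma~\ref{dense image}: take $h(x,y)=\langle a(x),\iota(y)\rangle$ for an isometric embedding $\iota\colon X\hookrightarrow\bR^N$). Hence $\eta=0$ outright.
\item Testing against bump sections $(\xi,0,0)$ then gives $\lambda=0$.
\item The range of $d\Phi$ is closed, because it contains the closed, finite-codimensional range of the Fredholm part $d_{(f,\bx)}\Phi$.
\end{enumerate}
Hence $d\Phi$ is onto. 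Your steps 2 and 3 (distributions supported at the $x_j$, elliptic regularity, jets of the Dirac operator) are superfluous.

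The paper says the same thing directly. On $T_{(f,\bx)}\cI_n$ the operator $\cD$ has index $-n$, so $\coker\cD$ is finite-dimensional. The dense image of $B_f$ therefore surjects onto it, and $\im\cD+\im B_f=W^{k-1,p}\Gamma(f^*TX)$.
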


The third and last ingredient in the proof makes use of the following notion.

\begin{definition}\label{Banach rectifiability}
    Let $\cB$ be a second-countable Banach manifold and let $n$ be a positive integer. We call a subset $\cZ \subset \cB$ \textbf{Fredholm $C^r$-rectifiable of codimension $n$} if $\cZ$ is contained in a countable union of images of $C^r$-Fredholm maps of index less than or equal than $-n$.
\end{definition}

The reason for introducing Fredholm rectifiability is that we use it to pass to the notion of infinite codimension at the level of the Fréchet space $\cP$. More abstractly, we have the following.

\begin{ingredient}\label{thm:C}
    Let $V$ be a Hausdorff topological vector space, $W$ a Banach space and $j: V \to W$ a continuous linear map with dense image. Let $K$ be a compact smooth manifold of dimension $k$ and let $\cR \subset W$ be Fredholm $C^1$-rectifiable of codimension $n$. If $k<n$, then 
    \begin{equation*}
        \{ f \in C^\infty(K, V) \, | \, (j\circ f)(K) \cap \cR = \emptyset \}
    \end{equation*} 
    is comeager in $C^\infty(K, V)$.
\end{ingredient}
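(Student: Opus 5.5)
We prove Theorem~\ref{thm:C} by combining a Sard--Smale argument with a parametric transversality argument, and then upgrading the conclusion from $C^1$ maps into $W$ to smooth maps into $V$ using density. First reduce to a single Fredholm map. By definition $\cR\subset\bigcup_{l}\phi_l(\cB_l)$, where each $\phi_l:\cB_l\to W$ is a $C^1$ Fredholm map of index $\le -n$ and each $\cB_l$ is second countable. A countable intersection of comeager sets is comeager. Moreover, $C^\infty(K,V)$ is a Baire space when $V$ is Fréchet, which is the case of interest ($V=\cP$); for a general Hausdorff $V$ we read ``comeager'' literally. So it suffices to show that for one $C^1$ Fredholm map $\phi:\cB\to W$ of index $\le -n$, the set $\cU_\phi=\{f : (j\circ f)(K)\cap\phi(\cB)=\emptyset\}$ is comeager. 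Using second countability and the local structure of Fredholm maps, write $\cB$ as a countable union of closed sets $\cB_{l,m}$ on which $\phi$ is proper; for instance, take closed balls in local charts where $\phi$ is a finite-dimensional perturbation of a linear Fredholm map, since Fredholm maps are locally proper. For each such piece, $\phi(\cB_{l,m})$ is closed in $W$ on a neighbourhood and $K$ is compact. Hence $\{f : (j\circ f)(K)\cap\phi(\cB_{l,m})=\emptyset\}$ is open in $C^\infty(K,V)$, since $f\mapsto j\circ f$ is continuous into $C^0(K,W)$. It therefore remains to prove density of each such set.

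For density, fix $f_0\in C^\infty(K,V)$ and a neighbourhood of it. Consider finite-dimensional families $F(t,s)=f_0(t)+\sum_{i=1}^N s_i v_i$ with $v_i\in V$, $s\in\bR^N$ small. Form the fibre product
\[
  \cM=\{(t,s,b)\in K\times \bR^N\times\cB \,:\, j(F(t,s))=\phi(b)\}.
\]
The key step is to choose $v_i$ so that the map $\Psi(t,s,b)=j(F(t,s))-\phi(b)$ is transverse to $0$ at every zero, at least over the proper piece. Because $d\phi$ has closed range of finite codimension, it suffices that $\{j(v_i)\}$ spans a complement of $\operatorname{im} d\phi_b$ for all relevant $b$. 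This uses the density of $j(V)$ in $W$: a finite-codimensional closed subspace plus a dense subspace equals $W$, so finitely many $j(v_i)$ suffice locally. Compactness of $K$ and properness on $\cB_{l,m}$ then reduce to finitely many $v_i$ overall. With this choice, $\cM$ is a $C^1$ manifold of dimension $k+N+\ind\phi\le k+N-n<N$.

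Next apply Sard's theorem to the projection $\cM\to\bR^N$. This is a $C^1$ map between finite-dimensional manifolds with source dimension less than $N$, so $C^1$ suffices and the image has measure zero. Hence for almost every small $s$, the map $f_s=F(\cdot,s)\in C^\infty(K,V)$ avoids $\phi(\cB_{l,m})$, and $f_s\to f_0$ as $s\to0$. This gives density.

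The main obstacle is the transversality step. We must ensure that finitely many directions from the dense subspace $j(V)$ surject onto the cokernels of $d\phi$ uniformly over the relevant compact set. We must also handle that $\phi(\cB)$ need not be closed, which is what the proper exhaustion by closed pieces is for. A secondary subtlety is working in $C^\infty(K,V)$ for a merely topological vector space $V$. We only use that adding the constant directions $s_i v_i$ is continuous, and that the evaluation $f\mapsto j\circ f\in C^0(K,W)$ is continuous.
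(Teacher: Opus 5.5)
Your proof is correct, and its core mechanism is the same as in the paper's Lemma~\ref{Banach to Hausdorff}. That mechanism has three parts: a countable closed exhaustion on which the Fredholm map is proper; finitely many directions $v_i\in V$ with $\operatorname{span}\{j(v_i)\}+\im d\phi_b=W$ near a compact fibre, obtained from density of $j(V)$; and Sard's theorem for a finite-dimensional fibre product of dimension $<N$.

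The decomposition differs from the paper's. The paper first lifts the problem to the Banach space $C^1(K,W)$. Proposition~\ref{Generic avoidance for Banach} forms the infinite-dimensional fibre product of the evaluation map $K\times C^1(K,W)\to W$ with the $F_i$. This shows that $\cE_K(\cR)$ is itself Fredholm $C^1$-rectifiable of codimension $n-k$, and the Lemma is then applied to $f\mapsto j\circ f$ from $C^\infty(K,V)$ to $C^1(K,W)$. You instead put $K$ directly into the finite-dimensional fibre product $K\times\bR^N\times_W\cB$ and perturb only by constant maps $f_0+\sum_i s_iv_i$. Your count $k+N+\ind\phi<N$ is exactly the paper's index bound $\ind T\pi_i\le k-n<0$.

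What each route buys:
\begin{itemize}
\item Your route is more elementary. It uses only density of $j(V)$ in $W$ and never needs the Banach manifold $\cI_i$. The paper's application of the Lemma, by contrast, tacitly needs density of $\{j\circ f : f\in C^\infty(K,V)\}$ in $C^1(K,W)$, which is true but not checked there.
\item The paper's route is more modular. It yields the reusable intermediate fact that $\cE_K(\cR)$ is Fredholm rectifiable in $C^1(K,W)$.
\end{itemize}

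Two points to make explicit when you write it up. First, $\phi(\cB_{l,m})$ is simply closed in $W$, since a proper map into a metric space is closed; the qualifier ``on a neighbourhood'' is unnecessary. Second, transversality ``over the proper piece'' does not by itself give a manifold, since $\cB_{l,m}$ is closed rather than open. Take the fibre product over an open neighbourhood $O$ of the compact set $\{b\in\cB_{l,m} : \phi(b)\in j(f_0(K))\}$. Then show, using properness and compactness of $K$, that for $|s|$ small every solution with $b\in\cB_{l,m}$ already lies in $O$. This step is what makes Sard's conclusion cover all of $\phi(\cB_{l,m})$.
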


With these three ingredients in place, we can proceed with the proof of the main theorem.

\begin{proof}[Proof of Theorem \ref{main theorem}]

Let $\{ I_j \}_{j \in \bN}$ and $\{ Y_k \}_{k \in \bN}$ denote the sets from Theorem \ref{thm:A}. Put 
\begin{equation*}
    v_j \coloneq \cI^{-1}(I_j) \in \Gamma(STM).
\end{equation*} 

Fix $m \geq k+1$ and let
\begin{equation*}
    j_m: \cP \to \cP^m
\end{equation*}
denote the natural dense inclusion. For any $n \geq 1$ define 
\begin{equation*}
    \cP^{m, \times}_n \coloneq \bigcup_{j,k} \pi^m\big(\cN^m_n (v_j, Y_k) \big) \eand \cP^\times_n \coloneq j^{-1}_m (\cP^{m, \times}_n ).
\end{equation*} 
By Theorem \ref{thm:B}, the set $\cP^{m, \times}_n$ is Fredholm $C^1$-rectifiable of codimension $n$.

We next show that $H \in \cP^\times_n$ for every $n$.

Let $A_0 \subset A$ be the full $\cH^1$-measure subset on which the approximate tangent to $A$ exists, the bubble description holds and assumption (iii) is satisfied. Clearly $\cH^1(A_0)>0$. For every $x \in A_0$, the bubbles at $x$ are holomorphic with respect to the complex structure determined, up to sign, by the approximate tangent line $T_x A$. Consequently,
\begin{equation*}
    A_0 = \bigcup_j E_j,
\end{equation*}
where $E_j$ consists of those points at which the bubbles are $I_j$-holomorphic and thus $T_x A$ is spanned by $v_j(x)$. By countable additivity, there is a $j_0$ such that 
\begin{equation*}
    \cH^1(E_{j_0}) >0.
\end{equation*}
By assumption (iii), for every $x \in E_{j_0}$ the point $f(x)$ lies on one of the bubbles at $x$. Thus 
\begin{equation*}
    f(x) \in \Rat(X, I_{j_0}) \subset \bigcup_k Y_k.
\end{equation*}
Another countability argument produces $k_0$ such that
\begin{equation*}
    E \coloneq E_{j_0} \cap f^{-1}(Y_{k_0})
\end{equation*}
has positive $\cH^1$-measure. 

At almost every density point $x$ of $E$ the relation $f(E) \subset Y_{k_0}$ gives
\begin{equation*}
    T_x f \big( v_{j_0}\big) \in T_{f(x)}Y_{k_0}.
\end{equation*}
There are therefore infinitely many such points. For any $n$, choose $n$ of them, $x_1, \dots, x_n$. Then
\begin{equation*}
    \big(f, (x_1, \dots, x_n), H \big) \in \cN^m_n(v_{j_0}, Y_{k_0}),
\end{equation*}
and consequently $H \in \cP^\times_n$.

Since $n$ was arbitrary, it follows that
\begin{equation*}
    H \in \cP^\times \coloneq \bigcap_{n\in \bN} \cP^\times_n.
\end{equation*}
Finally, let $K$ be a compact smooth manifold of dimension $d$ and choose $n>d$. By Theorem \ref{thm:C}, a comeager subset of $C^\infty(K, \cP)$ consists of families whose images avoid $\cP^\times_n$ and thus these families also avoid $\cP^\times$. Hence $\cP^\times$ is infinite-codimensional in the sense of Theorem \ref{main theorem}.
\end{proof}
\section{Rational curves in hyperkähler manifolds}\label{Rational curves in hyperkähler manifolds}

\subsection{Rational sweepout loci in hyperkähler manifolds}
The purpose of this section is to prove Theorem \ref{thm:A}. 

\begin{theorem}[{Verbitsky \cite[Theorem 1.2]{Verbitsky2004}}] \label{Verbitsky's theorem}
  Let $(X, I_1, I_2, I_3, g)$ be a (not necessarily compact) hyperkähler manifold. Then the set of $I \in S(X)$ for which there is a rational curve on $(X, I)$ is at most countable. 
\end{theorem}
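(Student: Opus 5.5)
The statement is Verbitsky's theorem, which the paper cites rather than proves; here is how I would reconstruct it. The plan rests on a Kähler-class obstruction. For a rational curve $C$ on $(X,I)$ with $I = a_1 I_1 + a_2 I_2 + a_3 I_3$, the Kähler form is $\omega_I = a_1\omega_1 + a_2\omega_2 + a_3\omega_3$. The curve is a compact $I$-holomorphic curve, so the calibration inequality gives
\begin{equation*}
  \int_C \omega_I = \operatorname{Area}_g(C) > 0 .
\end{equation*}
Moreover, for any unit $J \in S(X)$ anticommuting with $I$, the form $\omega_J$ is the real part of a holomorphic $2$-form on $(X,I)$, so it restricts to zero on $C$. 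Since $\omega_J$ is closed, this restriction depends only on the homology class $[C]\in H_2(X,\bZ)$.

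Concretely, write $\eta = [C] \in H_2(X,\bZ)$ and consider the vector
\begin{equation*}
  \Big(\int_\eta \omega_1, \int_\eta \omega_2, \int_\eta \omega_3\Big) \in \bR^3 .
\end{equation*}
By the previous paragraph this vector is orthogonal to the plane $I^\perp \subset \bR^3$ of complex structures anticommuting with $I$, so it is parallel to $(a_1,a_2,a_3)$. Its pairing with $(a_1,a_2,a_3)$ is $\int_\eta\omega_I>0$, so the vector is nonzero and positively parallel. Hence $I$ is uniquely determined by $\eta$ as the normalization of this vector.

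The map from integral classes that carry rational curves to $S(X)$ is therefore well defined, and its image contains every $I$ admitting a rational curve. When $X$ has a countably generated homology, $H_2(X,\bZ)$ is countable and we are done. For a general, possibly noncompact, connected manifold, singular homology is still countable because $X$ is second countable and hence homotopy equivalent to a countable CW complex. This gives countability of the set of such $I$.

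The main point to verify carefully is the vanishing $\omega_J|_C = 0$. The $(2,0)$-form $\omega_J + i\omega_K$ restricts to zero on any complex curve, since a curve has no $(2,0)$ tangent directions. This applies equally to singular or non-immersed rational curves, by pulling back along the normalization $\bP^1 \to C$. It also holds for those bubbles which are just holomorphic maps from $S^2$, since positivity of energy for nonconstant maps handles multiple covers. No other obstacle arises.
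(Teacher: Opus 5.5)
Your argument is correct and is essentially the paper's: in both, the homology class of the curve determines $I$ as the normalized period vector $\big(\int_{[C]}\omega_1,\int_{[C]}\omega_2,\int_{[C]}\omega_3\big)$, and countability of $H_2(X,\mathbb{Z})$ finishes the proof. The difference is that you identify this vector through the vanishing of the holomorphic symplectic form $\omega_J + i\omega_K$ on $C$, whereas the paper uses maximality in Wirtinger's inequality; your positivity $\int_C\omega_I>0$ also excludes the degenerate (zero/constant) case directly, so the paper's appeal to trianalyticity is not needed for curves.
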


\begin{proof}[Proof sketch]
  Suppose that $Z$ is a compact real analytic $2$--cycle on $X$. Now, define a function
  \begin{equation*}
    V_{[Z]}: \, S(X) \to \bR: \; V_Z(L) \coloneq \frac{1}{2} \int_Z \omega_L.
  \end{equation*}
  Clearly, this depends linearly on $L = a I_1 +b I_2 + c I_3$ so $V_Z$ is a restriction of a polynomial to $S(X) \cong \bP^1$.  If $Z\subset (X, I)$ is complex analytic, then Wirtinger's inequality implies that $V_Z$ attains its maximum at $I$. Now, for a given homology class $[Z]$, there are either finitely many such maxima $I$ or the map $V_{[Z]}$ is constant. 
  
  In the case that $V_{[Z]}$ is constant, then $V_{[Z]}(L)= \vol_g(Z)$ for all $L$ and from the characterization of equality in Wirtinger's inequality, it follows that $Z$ is $L$--holomorphic for all $L \in S(X)$ and hence trianalytic. However, trianalytic varieties have real dimension divisible by $4$, contradiction.

  Taking the union of all $L$ for all homology classes $[Z]$ for which $V_{[Z]}$ is nonconstant yields the set in the theorem.
\end{proof}

In the following, \textbf{analytic variety} will always mean an irreducible reduced complex analytic space. 

\begin{definition}
  We say that a compact complex manifold $X$ of dimension $n$ is \textbf{uniruled} if there exists a diagram
  \[\begin{tikzcd}
    U && X \\
    B
    \arrow["f", from=1-1, to=1-3]
    \arrow["\pi"', from=1-1, to=2-1]
  \end{tikzcd}\]
  where $U$ and $B$ are compact analytic varieties, the dimension of $B$ is $n-1$, $\pi$ is a proper holomorphic surjection whose general fiber is isomorphic to $\mathbb{P}^1$ and $f$ is a dominant holomorphic map.
\end{definition}

\begin{prop}
  Let $X$ be a compact Kähler manifold with the canonical bundle $K_X$ trivial. Then $X$ is not uniruled.
\end{prop}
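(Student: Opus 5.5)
We argue by contradiction. Suppose $X$ is uniruled, with a family $\pi: U \to B$, $f: U \to X$ as in the definition. The plan is to show that a generic member of the covering family is a free rational curve, and that freeness is incompatible with $K_X$ trivial.

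First we produce a single smooth rational curve through a general point. Take a resolution $\tilde U \to U$ and restrict to a general fiber $C \cong \mathbb{P}^1$ of $\tilde U \to B$, chosen away from the singular loci. Since $f$ is dominant and $\dim U = n$, the map $f: \tilde U \to X$ is generically finite and generically submersive. Call the composition $\phi: \mathbb{P}^1 \to X$.

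Next we show that $\phi^*TX$ is globally generated, and in fact nef with a positive summand. At a general point of $C$, $df$ is an isomorphism $T\tilde U \to TX$. The normal directions of $C$ in $\tilde U$ form the trivial bundle $\pi^*T_bB$ along $C$. Pushing these forward, together with $T\mathbb{P}^1 = \cO(2)$, gives a generically surjective sheaf map
\[
  \cO(2) \oplus \cO^{\,n-1} \to \phi^*TX .
\]
Write $\phi^*TX \cong \bigoplus_i \cO(a_i)$ by Grothendieck. Each summand receives a nonzero map from $\cO(2)\oplus\cO^{n-1}$, so $a_i \ge 0$ for all $i$. Moreover $d\phi: T\mathbb{P}^1 = \cO(2) \to \phi^*TX$ is nonzero, since $\phi$ is nonconstant, so some $a_i \ge 2$. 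Hence
\[
  \deg \phi^*(-K_X) = \sum_i a_i \ge 2 .
\]
But $K_X$ is trivial, so $\deg \phi^* K_X = 0$, which is a contradiction.

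The main obstacle is justifying the general-fiber statements in the analytic (non-projective) setting. In particular we need that the map from the normal directions is generically surjective at a general point of a general fiber. I would handle this by generic smoothness: by a Sard-type argument on the open dense set where $U$ and $B$ are smooth, $\pi$ is a submersion and $f$ is étale, and the general fiber meets this set in a dense open subset of $\mathbb{P}^1$. Only generic surjectivity is needed for the degree argument, since a map $\cO(b)\to\cO(a)$ that is nonzero forces $a \ge b$.

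An alternative route, if this proves delicate, uses the Kähler Ricci-flat metric from Yau's theorem together with a Bochner argument: a covering family of rational curves would force $H^0(X, K_X^{\otimes m}) = 0$ for all $m\ge1$, contradicting $K_X \cong \cO$. This is because a pluricanonical form restricts to $K_{\mathbb{P}^1}^{\otimes m}$ along free curves and so vanishes on a dense set.
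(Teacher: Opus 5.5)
Your argument is correct. It runs through the tangent bundle rather than the canonical bundle, so it is the dual form of the paper's proof rather than literally the same proof.

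\textbf{The paper's route.} It takes a nonzero $s\in \mathrm{H}^0(X,K_X)$ and pulls it back through the Jacobian $J_f$ to a section of $K_{U^\circ}$ that is nonzero on a general fiber $C_b$. It then kills this section using $K_{U^\circ}|_{C_b}\cong K_{C_b}\otimes \pi^*K_{B^\circ}|_{C_b}\cong\cO(-2)$.

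\textbf{Your route.} You use that $df\colon T\tilde U|_C\to\phi^*TX$ is generically an isomorphism and read off $\deg\phi^*(-K_X)\ge 2$ from the Grothendieck splitting types.

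\textbf{How they relate.} The two are the same determinant computation. $\det(df)$ is a nonzero section of $K_{\tilde U}|_C\otimes\phi^*K_X^{-1}$, a line bundle of degree $-2+\deg\phi^*(-K_X)$. Taking determinants directly would let you skip the splitting-type bookkeeping.

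\textbf{What each buys.} Your route shows the general member of a covering family is a free curve with $-K_X\cdot C\ge 2$, so it also rules out $K_X$ nef, not just trivial. The paper's route needs only $\mathrm{H}^0(X,K_X)\neq 0$, and with $s^{\otimes m}$ it gives vanishing of all plurigenera. That is exactly what your ``alternative route'' amounts to, once the form is pulled back to $\tilde U$ via $J_f$ before restricting to $C$. Yau's theorem and a Bochner argument play no role there.

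\textbf{One step to justify.} $\pi^*T_bB\cong\cO^{n-1}$ is a quotient of $T\tilde U|_C$, not a subbundle. So ``pushing forward the normal directions'' requires the extension $0\to\cO(2)\to T\tilde U|_C\to\cO^{n-1}\to 0$ to split. It does, since $H^1(\mathbb{P}^1,\cO(2))=0$, and the determinant version does not need the splitting at all.
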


\begin{proof}
  Suppose that $X$ is uniruled and we have a diagram as in the definition above. Let $s\in \mathrm{H}^0(X, K_X)$ be nonzero. Since $U$ is compact of dimension $n$, it follows that $f$ is generically finite and consequently, on the regular part\footnote{This is: the Zariski-open part of $U$ on which $\pi$ is a submersion.} $U^\circ$ it defines a natural map
  \begin{equation*}
    J_f \coloneq \Lambda^n (Tf^\vee): \; f^* K_X \to K_{U^\circ}.
  \end{equation*}
  This yields a holomorphic section
  \begin{equation*}
    \tilde{s} \coloneq J_f (f^* s) \in \mathrm{H}^0(U^\circ, K_{U^\circ}).
  \end{equation*}
  Since $f$ is generically finite, $J_f (f^* s)$ is not identically zero on $U^\circ$. Hence its restriction to $C_b = \pi^{-1}(b)$ is not identically zero for general $b \in B^\circ$.
  
  Now, observe that as $\pi|_{C_b}$ is constant, it holds that
  \begin{equation*}
    K_{U^\circ} = K_{U^\circ/B^\circ} \otimes \pi^*K_{B^\circ}\eand  \pi^* K_{B^\circ}\big|_{C_b} \cong \cO_{C_b},
  \end{equation*}
  so that 
  \begin{equation*}
    K_{U^\circ}\big|_{C_b}  \cong \cO(-2),
  \end{equation*}
  and thus
  \begin{equation*}
    \mathrm{res}_{C_b} \tilde{s} \in \mathrm{H}^0(C_b, K_{U^\circ}\big|_{C_b}) \cong \mathrm{H}^0(\mathbb{P}^1, \cO(-2)) = 0.
  \end{equation*}
  This contradicts the fact that $\tilde{s}|_{C_b}$ is not identically zero for general $b \in B^\circ$.
\end{proof}

In particular, compact hyperk\"ahler manifolds are not uniruled. The following proposition relates this property to the sweepout locus of rational curves.

\begin{prop}\label{rational sweepout}
  Let $(X, I)$ be a compact K\"ahler manifold which is not uniruled. Then the sweepout locus of the rational curves $\Rat(X, I)$ is contained in a countable union of proper analytic subsets of $X$.
\end{prop}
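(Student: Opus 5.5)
The plan is to parametrize all rational curves on $(X,I)$ by countably many compact families and use non-uniruledness to show that each family sweeps out a proper analytic subset. I will work in the Barlet space (equivalently, the Douady space) $\mathcal{C}(X)$ of compact $1$-cycles on $X$. Since $X$ is compact Kähler, Bishop's theorem (in Barlet's version) says that each connected component of $\mathcal{C}(X)$ is a compact analytic space. The reason is that the volume $\int_C \omega$ is locally constant on components, so cycles in a component have bounded mass. Moreover, there are only countably many components, since cycle classes in $H_2(X,\mathbb{Z})$ are countable and, for fixed class and bounded volume, there are only finitely many components. Let $\{\mathcal{C}_\alpha\}_{\alpha\in\bN}$ be the irreducible components of $\mathcal{C}(X)$. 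For each $\alpha$, let $\mathcal{R}_\alpha\subset \mathcal{C}_\alpha$ be the closure of the locus of irreducible reduced cycles whose normalization is $\mathbb{P}^1$. Every irreducible rational curve $C$ is a point of some $\mathcal{R}_\alpha$.

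Next I will construct the universal family. Over each irreducible component $R$ of $\mathcal{R}_\alpha$ there is an incidence variety $U_R=\{(c,x)\in R\times X : x\in |c|\}$. It is compact and analytic, with proper projection $\pi:U_R\to R$ and evaluation $f:U_R\to X$. By Remmert's proper mapping theorem, $f(U_R)$ is an analytic subset of $X$, and it is irreducible whenever $U_R$ is. It remains to show that $f(U_R)\neq X$. Then $\Rat(X,I)\subset\bigcup_{R} f(U_R)$ is a countable union of proper analytic subsets, as claimed in Proposition \ref{rational sweepout}.

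I will argue by contradiction: suppose $f(U_R)=X$ for some $R$. Then I will cut $R$ down to a subvariety $B\subset R$ of dimension $n-1$ such that $f|_{\pi^{-1}(B)}$ is still dominant. To do this, note that $\dim U_R=\dim R+1\ge n$. Choose an irreducible component $U'$ of $f^{-1}$ of a general point, or equivalently intersect $R$ with general hypersurfaces in a local embedding, or better, use the fiber dimension of $f$. Concretely, take $B$ to be a general complete intersection in $R$ of codimension $\dim R-(n-1)$. Because $f$ is dominant and $\dim U_R\geq n$, general fibers of $f$ have dimension $\dim R+1-n$, so cutting by general hypersurfaces preserves dominance. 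For a general point $b\in B$ the cycle is an irreducible rational curve. Replacing $\pi^{-1}(B)$ by a suitable component and normalizing, the general fiber becomes $\mathbb{P}^1$ (the normalization of the rational curve). This produces exactly the diagram in the definition of uniruledness, contradicting the hypothesis.

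The main obstacle is this reduction step. First, one must ensure that a general member of $R$ is irreducible and rational, which holds since rationality of irreducible members is a closed condition in families (a Zariski-closed condition on the irreducible locus), and we defined $R$ as a closure of such members. Second, one must ensure that after normalization of $U$, the general fiber over $B$ is honestly $\mathbb{P}^1$ and $B$ is compact of dimension exactly $n-1$. Compactness of $B$ is where the compactness of Barlet components, and hence the Kähler assumption, is essential. Without it the sweepout could be a non-closed set, and the uniruledness definition would not apply. I would first try to handle the irreducibility issue by taking the Zariski-open subset of $R$ of reduced irreducible cycles, on which the normalization of the universal family is a $\mathbb{P}^1$-fibration generically, and then take closures.
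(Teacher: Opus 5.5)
Your outline matches the paper's: Bishop compactness of the components of $\cC_1(X)$, compact families whose general member is an irreducible rational curve, Remmert's theorem for the sweep-outs, and non-uniruledness for properness. However, the step you single out as the main obstacle fails as written. A ``general complete intersection'' of codimension $\dim R-(n-1)$ in $R$ needs hypersurfaces on $R$, essentially a projective embedding. For a non-projective compact K\"ahler $X$ (the relevant case, since $(X,I_j)$ is typically not projective), the components of $\cC_1(X)$ are only of Fujiki class $\mathcal{C}$ and need not carry usable divisors. Cutting by hypersurfaces in a local embedding only gives a non-compact germ, which does not fit the definition of uniruledness. The paper does not cut down at all; it tacitly treats a compact covering family of rational curves as uniruledness. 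You should either cite the equivalence of the two notions for compact K\"ahler manifolds, or use that the $K_X$-trivial argument is local over the base, which suffices for Theorem \ref{thm:A}. For the latter, take a general point $u_0$ of the normalized family over $R$ with $b_0=\pi(u_0)$. Then $df_{u_0}$ is injective on the fiber direction, so $d\pi$ is injective on $\ker df_{u_0}$. Choose an $(n-1)$-dimensional germ $B_0\subset R$ through $b_0$ with $T_{b_0}B_0$ complementary to $d\pi(\ker df_{u_0})$. Then $f$ has rank $n$ at $u_0$ on $\pi^{-1}(B_0)\cong B_0\times\bP^1$. For nonzero $s\in\mathrm{H}^0(X,K_X)$, $f^*s$ is a canonical section nonzero at $u_0$, yet its restriction to $\{b_0\}\times\bP^1$ lies in $\mathrm{H}^0(\bP^1,\cO(-2))=0$.

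The second problem is the claim that a general member of each component $R$ of $\mathcal{R}_\alpha$ is irreducible and rational. This is exactly where the paper does its work (Zariski-regularity of $\cR(X)$), and you only assert it. Euclidean closedness of rationality (semicontinuity of geometric genus) does not give Zariski-closedness without a constructibility argument, and a Euclidean closure need not be analytic. You also do not need such a strong statement. Take $\mathcal{R}_\alpha$ to be the Zariski closure of $A_\alpha=\cR(X)\cap\mathcal{C}_\alpha$; it has finitely many irreducible components because $\mathcal{C}_\alpha$ is compact. Now use the paper's dichotomy: if $R\subset F_2$, or the normalized family over $R$ has positive generic genus, then $A_\alpha\cap R$ lies in a proper closed analytic $T\subsetneq R$. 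Hence $A_\alpha\subset T\cup\bigcup_{R'\neq R}R'$, contradicting minimality of the closure. With this patch your route mildly simplifies the paper's. Using compactness of the components early gives finitely many good families per component and removes the need for Campana's covering proposition. The paper's Zariski-regularity argument, by contrast, works in an arbitrary reduced parameter space and uses compactness only afterwards.
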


\begin{proof}[Proof of Theorem \ref{thm:A}]
  Since an analytic subset can be stratified by complex submanifolds (if they are of real codimension strictly larger than $2$ we can thicken them), the claim follows from Theorem \ref{Verbitsky's theorem} and Proposition \ref{rational sweepout}.
\end{proof}

The remainder of this section will consist of the proof of Proposition \ref{rational sweepout}. First, some complex-analytic preliminaries are in order.

\subsection{General facts about the cycle space}
The purpose of this subsection is to gather some generalities on Barlet's cycle space which are necessary in the proof of Proposition \ref{rational sweepout}.
\begin{definition}
  Let $X$ be a reduced complex space. The \textbf{Barlet space} of compact $n$--cycles is defined to be the following set of formal sums of analytic subsets of $X$
  \begin{align*}
  \mathcal C_n(X)\coloneq \Bigg\{ \sum_{j=1}^{r} m_j Z_j\ \Bigg|\
  \begin{array}{l}
    Z_j \subset X \text{ are distinct irreducible compact analytic} \\
    \text{subsets of pure dimension } n \, \text{with } m_j \in \mathbb Z_{>0}
  \end{array}
  \Bigg\}.
\end{align*}
  If $Z  =\sum_{j=1}^{r} m_j Z_j \in \cC_n(X)$ is an analytic cycle, we denote by 
  \begin{equation*}
    |Z| \coloneq  \bigcup_{j = 1}^r Z_j
  \end{equation*}
  its \textbf{support}. The cycle $Z$ is called \textbf{reduced} if all $m_j = 1$.

  More generally, if $S$ is a set parametrizing a family of cycles $(Z_s)_{s \in S}$ of $n$--cycles in $X$, we define its \textbf{set-theoretic graph} to be 
  \begin{equation*} 
    |\Gamma_S| \coloneq \{(s,x) \in S \times X \mid x \in |Z_s|\}. 
  \end{equation*} 
  In particular, the \textbf{tautological family} over $\cC_n(X)$ is the family whose member over a point $Z \in \cC_n(X)$ is precisely the cycle $Z$ itself.
\end{definition}

For a comprehensive treatment of the cycle space $\cC_n(X)$ we refer the reader to \cite{BarletMagnusson2019CyclesI}, \cite{BarletMagnusson2025CyclesII}. The fundamental result is that this set carries a natural structure of a reduced complex space with the expected universal property for proper analytic families of compact cycles.

\begin{theorem}[{\cite[Theorem 4.6.1]{BarletMagnusson2019CyclesI}}]
  Let $X$ be a complex space and let $n \geq 0$. Then the set $\cC_n(X)$ of compact analytic $n$--cycles in $X$ admits a natural structure of a reduced complex space such that the tautological family of compact $n$--cycles in $X$, parameterized by $\cC_n(X)$, is a proper analytic family. Moreover, this family has the following universal property:
  
  If $S$ is a reduced complex space and $(Z_s)_{s \in S}$ is a proper analytic family of compact $n$--cycles in $X$, then the associated classifying map $f: S \rightarrow \cC_n(X),\, s \mapsto Z_s $ is holomorphic.
\end{theorem}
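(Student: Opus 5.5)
The plan is to follow Barlet's original strategy: build local charts of $\cC_n(X)$ from \emph{adapted scales}, realize each chart as a Banach analytic set, and then use a compactness argument to show these sets are locally finite-dimensional. Because the universal property is close to built into the definition of an analytic family, the substance lies in the local model. By working locally on $X$ and embedding, I may assume $X$ is a closed analytic subset of an open set in $\bC^N$.

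First, the local description. Fix a cycle $Z_0 = \sum m_j Z_j$. A \emph{scale} is a chart $E = (U, \rho, V \times B)$ with $\rho : U \to \bC^n \times \bC^{N-n}$, where $V \Subset \bC^n$ and $B \Subset \bC^{N-n}$ are polydiscs. It is \emph{adapted} to $Z_0$ if $|Z_0| \cap \rho^{-1}(\overline V \times \partial B) = \emptyset$. In that case the projection $|Z_0| \cap U \to V$ is proper and finite, of some degree $k$. Counting with multiplicities, $Z_0 \cap U$ is then the same as a holomorphic map $V \to \mathrm{Sym}^k(B)$, that is, a $k$-sheeted branched multigraph. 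I will embed $\mathrm{Sym}^k(\bC^{N-n})$ into a vector space via symmetric (Newton or elementary) functions, so that multigraphs become holomorphic maps satisfying polynomial constraints. Nearby cycles $Z$ that remain adapted to $E$ are then the points of an analytic subset $\cH(E,k)$ of a Banach space of bounded holomorphic maps on a slightly smaller polydisc. The extra condition that $|Z| \subset X$ is also analytic in this Banach space.

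Second, the global chart and its finite-dimensionality. Since $|Z_0|$ is compact, finitely many adapted scales $E_1, \dots, E_r$ cover it. A cycle near $Z_0$ is then a tuple of multigraphs that agree on overlaps, and these compatibility conditions are holomorphic in the Banach topology. This gives a Banach analytic set $\Sigma$ that parametrizes a neighbourhood of $Z_0$ in $\cC_n(X)$. To show $\Sigma$ is finite-dimensional, I will slightly shrink the polydiscs. Restricting to the smaller polydiscs is a compact operator by Montel, and by analytic continuation a cycle is determined by these restrictions. Hence the identity of $\Sigma$ factors through a compact linear map, and Douady's lemma then gives that $\Sigma$ is locally a finite-dimensional analytic set. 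Taking its reduction gives the reduced complex structure.

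I expect the main obstacle to be this finite-dimensionality step together with the independence from choices. One must check that changing the adapted scales gives holomorphic transition maps, which requires holomorphy of the "change of projection" operation on multigraphs, and that the resulting topology is Hausdorff.

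Finally, the family properties. The tautological family is analytic because, in each chart, the multigraph it defines is the tautological holomorphic map. Its graph is closed, and properness follows from the compactness of the supports together with the local uniform bounds coming from the scales. For the universal property, take a proper analytic family $(Z_s)_{s\in S}$. By definition it is locally given, in adapted scales, by multigraphs depending holomorphically on $s$. The classifying map, read in the charts above, is then a holomorphic map into $\Sigma$. Because $S$ is reduced, this map factors through the reduced structure, so it is holomorphic.
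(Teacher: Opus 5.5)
The paper gives no proof of this statement: it is quoted from Barlet--Magn\'usson, and the remark after it only explains what a proper analytic family is. Measured against Barlet's construction in that source, your outline has the right architecture: multigraphs as holomorphic maps into $\mathrm{Sym}^k(B)$ via the Newton embedding, Banach analytic sets attached to finitely many adapted scales, Montel compactness, and Douady's finite-dimensionality criterion. However, the two steps that carry the theorem are asserted rather than proved.

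The first is your claim that ``agreeing on overlaps'' is a holomorphic condition in the Banach topology. Two overlapping scales $E_i$ and $E_j$ use different projections, so a multigraph over $U_i$ and one over $U_j$ cannot be compared directly. What is needed is Barlet's change-of-scale theorem. It says that if $E'$ is adapted to $Z_0$ and sits inside the chart of $E$, then the map sending a multigraph $F$ near $F_0$ in $E$ to the multigraph of the same cycle read in $E'$ is holomorphic on a Banach neighbourhood of $F_0$. You name this as the main obstacle but give no argument for it, and everything downstream depends on it.

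The second is the factorisation needed for Douady's criterion, and here your stated reason does not suffice. Analytic continuation only shows that the restriction map $\ell$ to smaller polydiscs is injective on $\Sigma$. Douady's lemma instead needs a holomorphic $\phi$, defined on an \emph{open} subset of the target Banach space, with $\phi\circ\ell=\mathrm{id}$ on $\Sigma$. The identity theorem does not supply $\phi$. The natural candidate, analytic extension from $\overline{U}_i'$ to $U_i$, is defined on no open set, since a generic multigraph over $\overline{U}_i'$ does not extend. Where it is defined, it is not continuous for the sup norms.

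In Barlet's argument, $\phi$ is built as follows:
\begin{enumerate}
  \item Choose the cover so that the shrunken scales $E_j'$ still cover $|Z_0|$.
  \item Reconstruct each $F_i$ over $\overline{U}_i$ locally from the data $F_j|_{\overline{U}_j'}$ by change of scale.
  \item Patch these local reconstructions over the compact set $\overline{U}_i$.
\end{enumerate}

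So the change-of-scale theorem is the missing idea in both places. Once it is supplied, the rest of your plan goes through essentially as you describe: the reduced finite-dimensional structure, independence of choices, properness of the tautological family, and the universal property via holomorphy in adapted scales plus reducedness of $S$.

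A minor point: your opening reduction to $X$ closed in an open subset of $\mathbb{C}^N$ is not legitimate globally, because a compact cycle need not lie in a single chart. It is also unnecessary, since scales are local embeddings anyway.
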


\begin{rem}
  The statement of this theorem requires an explanation of what a proper analytic family means. At an intuitive level, an analytic family of cycles is a family of cycles which varies holomorphically in adapted local coordinates.

  More precisely, a family of cycles is called \textbf{analytic} if near every parameter value $s_0 \in S$ and in a scale\footnote{An $n$--scale \cite[Definition 4.2.1]{BarletMagnusson2019CyclesI} on a complex space consists of a triple $E=(U, B, j)$ with $U \subset \bC^n$ and $B \subset \bC^p$ being open relatively compact polydiscs and $j$ is a closed embedding of an open subset $X_E$ of $X$ into an open neighborhood $W$ of $\bar{U}\times \bar{B}$.} $E=(U, B, j)$ adapted\footnote{We say that the scale $E$ is adapted \cite[Definition 4.2.2]{BarletMagnusson2019CyclesI} to an $n$--cycle $Z$ if $j^{-1}(\bar{U} \times \partial B) \cap |Z| = \emptyset$.} to the cycle $Z_{s_0}$, the nearby cycles are represented as finite multigraphs of constant degree $k$ over $U$ which are holomorphic maps
  \begin{equation*}
    f_E: S_E \times U \to \mathrm{Sym}^k(B),
  \end{equation*}
  where $S_E$ is an open neighborhood of $s_0$ in $S$. For more details on this, we refer the reader to \cite[Definition 4.3.1]{BarletMagnusson2019CyclesI}.

  An analytic family $(Z_s)_{s\in S}$ is called \textbf{proper} if the projection map $\pi_S: |\Gamma_S| \to S$ is proper. 
\end{rem}

If $X$ is K\"ahler, the following compactness property of the cycle space can be deduced from Bishop's compactness theorem \cite[Theorem 1]{Bishop1964}.

\begin{cor}[{\cite[Corollary 4.2.76]{BarletMagnusson2019CyclesI}}]\label{compactness property}
  Let $X$ be a compact K\"ahler manifold. Then the connected components of $\cC_n(X)$ are compact.
\end{cor}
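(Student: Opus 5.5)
The plan is to combine two facts: the volume of a cycle is locally constant on $\cC_n(X)$, and Bishop's theorem \cite[Theorem 1]{Bishop1964} gives compactness for cycles of bounded volume.

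First I would fix a K\"ahler form $\omega$ on $X$. To each $Z=\sum m_j Z_j\in\cC_n(X)$ I assign its volume
\begin{equation*}
  \vol_\omega(Z)\coloneq \sum_j m_j\int_{Z_j^{\mathrm{reg}}}\frac{\omega^n}{n!},
\end{equation*}
which by Wirtinger's equality is the mass of the integration current $[Z]$. The integration current is $d$-closed (Lelong), so $\vol_\omega(Z)=\langle [Z],\omega^n/n!\rangle$ depends only on the homology class $[Z]\in \mathrm{H}_{2n}(X,\bR)$. Along an analytic family, the currents $[Z_s]$ vary continuously in the weak topology: locally in an adapted scale they are integrals over multigraphs given by holomorphic maps $f_E$. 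Hence the pairing with the closed form $\omega^n/n!$, and with it the homology class, is locally constant in $s$. Consequently $\vol_\omega$ is constant, say equal to $V$, on each connected component $\cC^0$ of $\cC_n(X)$.

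Second, I would show that $\cC^0$ is sequentially compact. Let $(Z_\nu)\subset\cC^0$. The supports $|Z_\nu|$ have volume at most $V$ in the compact manifold $X$. By Bishop's theorem, after passing to a subsequence, $|Z_\nu|$ converge in the Hausdorff sense to a pure $n$-dimensional analytic set. Combining this with weak compactness of currents of bounded mass, a further subsequence satisfies $[Z_\nu]\rightharpoonup T$, where $T=[Z_\infty]$ is an analytic cycle with positive integer multiplicities. This uses King's/Harvey--Shiffman's structure theorem for positive closed integral currents, or directly Bishop's argument keeping track of multiplicities.

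The main obstacle is the third step: comparing topologies. I must show that $Z_\nu\to Z_\infty$ in Barlet's topology on $\cC_n(X)$, and not merely as currents. The plan is to invoke Barlet's result that on subsets of uniformly bounded volume, Barlet's topology coincides with the topology of Hausdorff convergence of supports together with weak convergence of currents. Concretely, take a scale $E$ adapted to $Z_\infty$. Hausdorff convergence makes $E$ adapted to $Z_\nu$ for $\nu$ large. The degree of $Z_\nu$ over $U$ equals the mass computed in $E$, which converges to that of $Z_\infty$. The multigraph maps $U\to\mathrm{Sym}^k(B)$ then converge uniformly by a normal-families argument in $\mathrm{Sym}^k(B)$, since they are uniformly bounded and holomorphic.

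Given this, $Z_\infty$ is a limit of points in $\cC^0$. Since connected components are closed, $Z_\infty\in\cC^0$. Finally, $\cC_n(X)$ is a reduced complex space, hence locally compact and metrizable on the countable-at-infinity pieces involved. Sequential compactness therefore yields compactness of $\cC^0$.
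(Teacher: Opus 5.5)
Your proposal is correct and follows the route the paper indicates. The paper gives no proof of its own; it cites Barlet--Magn\'usson and remarks that the result follows from Bishop's compactness theorem. Your argument is exactly this: for a K\"ahler form the volume $\int_Z \omega^n/n!$ is a homological invariant, hence constant on connected components, and Bishop's theorem together with comparing Barlet's topology to Hausdorff-and-current convergence gives compactness.

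Two details need tightening:

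\begin{enumerate}
\item Weak continuity of $s \mapsto [Z_s]$ only makes the homology class continuous in $s$. To get local constancy you also need that fundamental classes of analytic cycles lie in the discrete lattice given by the image of $\mathrm{H}_{2n}(X,\mathbb{Z})$.
\item The passage from sequential compactness to compactness should not rest on an appeal to metrizability of $\cC_n(X)$. Either run the argument with nets, or note that your third step makes $Z \mapsto [Z]$ a homeomorphism from $\{\vol_\omega \le V\}$ onto a closed subset of the weak-$*$ compact, metrizable set of currents of mass at most $V$.
\end{enumerate}
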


\subsection{Proof of Proposition \ref{rational sweepout}}

With these notions in place, we define the following subset of the Barlet space of $1$--cycles
\begin{equation*}
  \cR(X) \coloneq \big\{ C \, \big|\, C\, \text{is an irreducible reduced rational curve in }X \big\} \subset \cC_1(X).
\end{equation*}
As defined, $\cR(X)$ is in general not a closed analytic subspace of $\cC_1(X)$. However, a slightly weaker statement is sufficient for the proof of Proposition \ref{rational sweepout}. Before we explain this in detail, we need the following definition.

\begin{definition}[{\cite[Definition 2.1]{Campana2004Appendix}}]
   We call a subset $A$ of a complex space $M$ \textbf{Zariski--regular} if for each irreducible closed analytic subset $N$ of $M$ the intersection $A \cap N$ either contains the general point of $N$ or is contained in a countable union of closed proper analytic subsets of $N$.
\end{definition}

\begin{prop}
  Suppose that $X$ is a compact complex analytic space. Then $\cR(X)$ is a Zariski--regular subset of $\cC_1(X)$. 
\end{prop}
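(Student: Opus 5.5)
Fix an irreducible closed analytic subset $N \subset \cC_1(X)$. We have to show that either $\cR(X)\cap N$ contains the general point of $N$, or it is contained in a countable union of proper closed analytic subsets of $N$. The strategy is to describe $\cR(X)\cap N$ as the image of countably many analytic families of rational curves, and then apply a dichotomy to each family.

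\textbf{Step 1: a countable cover.} A cycle $C\in N$ lies in $\cR(X)$ exactly when $|C|$ is irreducible, $C$ is reduced, and some nonconstant holomorphic map $\bP^1\to X$ has image $|C|$. Choose this map birational onto its image, so it is the normalization. Its graph is a compact $1$-cycle in $\bP^1\times X$, of bidegree $(1,\ast)$ over $\bP^1$. So consider the closed analytic subset $\cC_1^{(1)}(\bP^1\times X)\subset\cC_1(\bP^1\times X)$ of irreducible cycles of bidegree $(1,\ast)$. Being the graph of a map is an open-and-closed condition within degree $1$ over $\bP^1$, away from degenerations where the cycle splits off vertical components $\{p\}\times C'$. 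Restricting to the locus of irreducible cycles, which is Zariski open in each component, gives a space $\mathcal{G}$ of graphs. Pushforward along $\pr_2$ defines a holomorphic map $\Phi:\mathcal{G}\to\cC_1(X)$. This follows from the universal property of the Barlet space, since the pushforward of the tautological family is a proper analytic family because $\bP^1$ is compact.

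$\mathcal{G}$ is a Zariski-open subset of a complex space with countably many irreducible components $\mathcal{G}_\alpha$. Each component sits in an irreducible component of $\cC_1(\bP^1\times X)$, and these are countable by second countability, and compact when $X$ is Kähler by Corollary~\ref{compactness property}. Let $\overline{\mathcal{G}}_\alpha$ be the closure of $\mathcal{G}_\alpha$. Then $\Phi$ extends holomorphically to $\overline{\mathcal{G}}_\alpha$, since pushforward is defined on all cycles. The image $\Phi(\overline{\mathcal{G}}_\alpha)$ is a closed irreducible analytic subset $W_\alpha$ by Remmert's proper mapping theorem, since properness comes from compactness of components. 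Moreover
\[
\Phi(\mathcal{G}_\alpha)\subset W_\alpha\cap\cR(X), \qquad \cR(X)\subset\bigcup_\alpha \Phi(\mathcal{G}_\alpha).
\]

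\textbf{Step 2: the dichotomy for each $\alpha$.} Intersect each family with $N$: take $\Phi^{-1}(N)\cap\overline{\mathcal{G}}_\alpha$ and decompose it into countably many irreducible components $T$. For each $T$, the image $\Phi(T)$ is an irreducible closed analytic subset of $N$. If $\Phi(T)\ne N$, it is a proper analytic subset. If $\Phi(T)=N$ and $T\cap\mathcal{G}$ is nonempty, then $T\cap\mathcal{G}$ is a nonempty Zariski-open, hence dense, subset of $T$. Its image therefore contains a dense constructible subset of $N$ by Remmert's theorem applied to the complement, so $\cR(X)\cap N$ contains the general point of $N$. If instead $T\cap\mathcal{G}=\emptyset$, the component $T$ contributes nothing to $\cR(X)$. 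If no $T$ is of the second kind, then $\cR(X)\cap N$ lies in the countable union of the proper subsets $\Phi(T)$, as required.

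\textbf{Main obstacle.} The hardest step is making precise that the graph locus, together with irreducibility of the image, is Zariski open inside a Barlet component, and that pushforward of cycles is holomorphic. For openness I would use that reducedness and irreducibility of members of a proper analytic family are Zariski-open conditions on an irreducible base (Barlet--Magnusson). For the same reason, degree $1$ over $\bP^1$ with no vertical components holds on a Zariski-open set. I would also need to control the locus where the image cycle $\Phi(\cdot)$ fails to be reduced, which happens when the map is not birational onto its image. I would handle this by restricting to the open set where the pushforward has multiplicity one, and by noting that every rational curve is attained birationally in some family. If $X$ is merely compact rather than Kähler, compactness of components fails. In that case I would replace Remmert's theorem by the properness of the family over relatively compact opens, together with a countable exhaustion, which still yields countable unions of analytic sets.
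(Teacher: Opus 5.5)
Your route is genuinely different from the paper's. The paper argues intrinsically on $N$. Since the weight locus $F_2=\{w\ge 2\}$ is closed analytic, either $N\subset F_2$ and $\cR(X)\cap N=\emptyset$, or the general member of $N$ is reduced and irreducible. The paper then normalizes the total space of the tautological family over $N\setminus F_2$, and the generic genus of the fibres decides between the two alternatives. You instead build rationality into the parameter space, via graphs of maps $\bP^1\to X$ in $\cC_1(\bP^1\times X)$ pushed forward by $\pr_2$, and read off the dichotomy from images of irreducible components. This avoids any normalization-in-families or genus analysis. Once the step below is repaired, in the Kähler case it also directly produces a cover of $\cR(X)$ by irreducible closed analytic sets $W_\alpha$ whose general point lies in $\cR(X)$. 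That is the conclusion of Proposition \ref{better covers}, obtained without passing through Zariski-regularity. The price is that you need Barlet's direct image theorem, properness of $\Phi$, and a Chevalley-type statement for $\Phi$.

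That last ingredient is where your argument has a real hole. You claim $\Phi(T\cap\mathcal{G})$ contains the general point of $N$ ``by Remmert's theorem applied to the complement'', i.e.\ $\Phi(T\cap\mathcal{G})\supseteq N\setminus\Phi(T\setminus\mathcal{G})$. In your setting this complement is always empty. The connected group $\mathrm{Aut}(\bP^1)$ preserves $\Phi$, hence leaves $T$ invariant. For $\Gamma_u\in T\cap\mathcal{G}$, the cycles $\Gamma_{u(t\,\cdot)}$ converge as $t\to\infty$ to $\bP^1\times\{u(\infty)\}+\{0\}\times u(\bP^1)$. This limit lies in $T\setminus\mathcal{G}$ and has the same image $u(\bP^1)$. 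So $\Phi(T\setminus\mathcal{G})$ is closed and contains the dense set $\Phi(T\cap\mathcal{G})$, hence equals $N$.

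What you need instead is a fibre-dimension count. Put $r=\dim T-\dim N$; here $r\ge 3$ because of reparametrizations. Every nonempty fibre of $\Phi|_T$ has dimension at least $r$. By Cartan--Remmert, the set $Z_{\ge r}$ of points of $T\setminus\mathcal{G}$ where the fibre of $\Phi|_{T\setminus\mathcal{G}}$ has dimension at least $r$ is closed analytic. Moreover $\dim\Phi(Z_{\ge r})\le\dim T-1-r<\dim N$. Over $N\setminus\Phi(Z_{\ge r})$ the fibre of $\Phi|_T$ cannot lie inside $T\setminus\mathcal{G}$, so it meets $\mathcal{G}$. This gives the general point; equivalently, you can invoke the analytic Chevalley theorem for proper maps.

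There are two smaller points. First, holomorphy of $\Phi$, and of its extension to $\overline{\mathcal{G}}_\alpha$, does not follow from the universal property alone, nor from pushforward being defined on all cycles. It is Barlet's direct image theorem for the proper map $\pr_2$, with contracted components dropped. You must also discard constant maps, which push forward to the empty cycle, as well as the non-birational ones; that is, work with $\mathcal{G}\setminus\Phi^{-1}(F_2\cup\{\emptyset\})$. Second, your non-Kähler fix does not work as stated, since images of relatively compact open sets are not closed analytic. The right observation is that $\Phi$ is proper on degree-one cycles for every compact $X$. For a hermitian metric $\omega_X$ one has $\vol(Z)=\deg_{\bP^1}(Z)\,\vol(\bP^1)+\int_{\pr_{2*}Z}\omega_X$, so Bishop--Barlet compactness gives properness, and Remmert then applies to each $T$ without any Kähler assumption.
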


\begin{proof}
  Suppose that $S \subset \cC_1(X)$ is an irreducible closed analytic subset. Recall that the weight of a compact cycle $C = \sum_{j=1}^r m_j C_j$ is $w(C) = \sum_{j=1}^r m_j$. It is a fact \cite[Proposition 4.7.2]{BarletMagnusson2019CyclesI} which follows from Remmert's direct image theorem applied to the proper holomorphic addition map of cycles, that the locus
  \begin{equation*}
      F_2 \coloneq \{ C \in \cC_1(X) \, | \, w(C)\geq 2\} 
  \end{equation*}
  is a closed analytic subset of the cycle space $\cC_1(X)$. Now, if $S \subset F_2$, then no general point of $S$ parametrizes an irreducible reduced curve. Consequently, $\cR(X) \cap S = \emptyset$. 

  On the other hand, if $S \nsubseteq F_2$, then $S \cap F_2$ is a proper closed analytic subset of $S$. Therefore the general point of $S$ parametrizes a reduced irreducible curve. Let us denote by $S^\circ$ this general part.

  Let us write $\pi: U_{S^\circ} \to S^\circ$ to denote the restriction of the tautological family of $\cC_1(X)$ to $S^\circ$. Let $\tilde{U}_{S^\circ}$ denote the normalization of the total space $U_S$. Now, on a subset $S^{\circ \circ}$ away from a proper closed analytic subset of $S^\circ$, the genus of the fiber of $\tilde{U}_{S^\circ}$ is constant. If this generic genus is $g=0$, then the intersection $S \cap \cR(X)$ contains the generic point of $S$. Else $S \cap \cR(X)$ is contained in the union of proper analytic subsets $S \setminus S^\circ$ and $S^\circ \setminus S^{\circ \circ}$. 
\end{proof}

The following is a consequence of the definition of Zariski--regularity and follows by an elementary induction argument. 

\begin{prop}[{\cite[Proposition 2.4]{Campana2004Appendix}}]\label{better covers}
  Let $M$ be a reduced complex space and suppose that $A \subset M$ is Zariski--regular. Then $A$ is contained in a countable union of irreducible closed analytic subsets $S_k$ such that $A \cap S_k$ contains the general point of $S_k$.
\end{prop}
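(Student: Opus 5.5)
We prove Proposition \ref{better covers} by a countable descent on the dimension of analytic sets, applying Zariski--regularity once at each stage. We may assume $M$ is second countable, as is the case for the cycle space $\cC_1(X)$ when $X$ is compact. Then $M$ has at most countably many irreducible components $M_1, M_2, \dots$, each a closed irreducible analytic subset of $M$.

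Fix one component $N = M_i$. By Zariski--regularity of $A$, one of two things happens. Either $A \cap N$ contains the general point of $N$, and we put $N$ into the collection $\{S_k\}$. Or $A \cap N \subset \bigcup_l N_l$, where the $N_l$ are countably many closed proper analytic subsets of $N$. In the second case we replace each $N_l$ by its irreducible components. This gives countably many irreducible closed analytic subsets of $M$, each of dimension strictly less than $\dim N$. In both cases, $A \cap N$ is covered by sets already placed in the collection together with countably many irreducible closed analytic sets of smaller dimension.

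We iterate, applying the same dichotomy to every newly produced irreducible set. Each application produces only countably many new sets. The process stops after finitely many rounds along each branch, because dimension strictly decreases and points, the irreducible sets of dimension $0$, are dealt with directly. If the point lies in $A$, it contains its own general point and we record it. Otherwise we discard it. Hence the whole tree of sets is countable. This holds even if $\dim M$ is not bounded globally, since each component is finite dimensional and we run the argument componentwise. Let $\{S_k\}$ be all the sets recorded in the first case. Every point of $A$ lies in some component $M_i$, and following the tree down it lies in a recorded $S_k$. So $A \subset \bigcup_k S_k$, and by construction $A \cap S_k$ contains the general point of $S_k$.

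The step needing care is the bookkeeping that makes the induction well-founded. One has to decompose each proper analytic subset into irreducible components, which are locally finite and hence countable in a second countable space. Then one inducts on $\dim N$ separately within each component. Beyond that, the argument is an elementary transfinite-free induction, as the text indicates.
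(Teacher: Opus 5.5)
Your argument is correct and is the elementary induction on dimension that the paper invokes by citing Campana. You apply the Zariski--regularity dichotomy to each irreducible component, pass to the irreducible components of the countably many proper analytic subsets, and use that the dimension strictly drops. The one thing to rephrase is that second countability of $M$ is a genuine standing hypothesis rather than a ``we may assume'': an uncountable discrete space with $A=M$ violates the conclusion, and the hypothesis does hold for the paper's application $\cC_1(X)$ with $X$ compact K\"ahler.
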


After these preparations, we can now proceed with the proof of the main result of this section.

\begin{proof}[Proof of Proposition \ref{rational sweepout}]
  Let us assume throughout that $X$ is connected.

  Since $\cR(X)$ is a Zariski--regular subset of $\cC_1(X)$, it is contained in a countable union of closed analytic subsets $S_k \subset \cC_1(X)$, $k \geq 1$ such that $\cR(X) \cap S_k$ contains the general point of $S_k$. We can also, without loss of generality, assume each $S_k$ to be contained in one connected component of $\cC_1(X)$. Moreover, since $X$ is K\"ahler, it follows from \ref{compactness property} that each $S_k$ is compact, and consequently, the graphs $|\Gamma_{S_k}| \subset \cC_1(X) \times X$ are compact complex analytic subsets. 

  Further, observe that
  \begin{equation*}
    \Rat(X, I) \subset \bigcup_{k\geq 1} \pi_X (|\Gamma_{S_k}|),
  \end{equation*} 
  where $\pi_X$ is the projection of the graph $|\Gamma_{S_k}| \subset S_k \times X$ into $X$. Now, by Remmert's proper mapping theorem \cite[Chapter I, Theorem 8.8]{DemaillyCADG}, $Y_k \coloneq \pi_X (|\Gamma_{S_k}|)$ are closed analytic subsets of $X$. 

  It remains to notice that each $Y_k$ is a proper analytic subset of $X$. Indeed, if $Y_k = X$ for some $k$, then after normalizing the tautological family over $S_k$, the fact that the general point of $S_k$ is a rational curve implies that $X$ is uniruled, contradicting the hypothesis.
\end{proof}

\section{Fueter maps with incidence conditions}\label{Fueter maps with incidence conditions}

Let us begin by putting Theorem \ref{thm:B} in context.

By \cite[Proof of Theorem 4.1]{HNS09}, the space
\begin{equation*}
  \cM^m \coloneq \{ (f, H) \in W^{k, p}(M, X) \times \cP^m  \,| \, \cF_H f = 0 \} 
\end{equation*}
is a $C^1$ Banach submanifold of $W^{k, p}(M, X) \times \cP^m$, where we have fixed once and for all $p$ and $k$ satisfying $k-3/p>2$. Here, the projection map $p^m : \cM^m \to \cP^m$ is Fredholm of index $0$. Consequently, from the Sard--Smale theorem \cite{Sm65} one can immediately deduce the following.

\begin{theorem}[{\cite[Theorem 4.1]{HNS09}}]
  Suppose that the data $(M, X, \cI)$ is as in Definition \ref{Setup}. For a generic $H \in \cP^m$, the solution space
  \begin{equation*}
    \cM(H) \coloneq \{ f \in W^{k, p}(M, X) \,| \, \cF_H f = 0 \} 
  \end{equation*}
  is a smooth $0$--dimensional submanifold of $W^{k, p}(M, X)$.
\end{theorem}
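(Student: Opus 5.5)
The plan is to deduce the statement from the structure of $\cM^m$ as a $C^1$ Banach manifold together with the Sard--Smale theorem. Both facts are taken as given from \cite[Proof of Theorem 4.1]{HNS09}: that $\cM^m$ is a $C^1$ Banach submanifold of $W^{k,p}(M,X)\times\cP^m$, and that $p^m:\cM^m\to\cP^m$ is Fredholm of index $0$. The key observation is that for any fixed $H$, the fibre $(p^m)^{-1}(H)$ is identified with $\cM(H)$ via $(f,H)\mapsto f$.

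Regularity of $\cM^m$ is available since $k-3/p>2$ and $m\geq k+1$. Note that $\cM^m$ is second countable, because $W^{k,p}(M,X)$ and $\cP^m$ are separable. The Sard--Smale theorem (in the version for $C^q$ Fredholm maps with $q>\max(0,\ind)$, here $q=1>0$) then says that the set of regular values of $p^m$ is residual in $\cP^m$. This is the first step.

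The second step treats a regular value $H$. By the implicit function theorem, $(p^m)^{-1}(H)$ is a $C^1$ submanifold of $\cM^m$ whose dimension equals $\dim\ker d p^m = \ind p^m + \dim\coker d p^m = 0$. So it is a discrete set, that is, a $0$--dimensional submanifold. Its image under the projection to $W^{k,p}(M,X)$ is $\cM(H)$. That projection restricted to the fibre is a homeomorphism onto its image, being the inverse of $f\mapsto (f,H)$. Hence $\cM(H)$ is a discrete, and therefore smooth $0$--dimensional, submanifold of $W^{k,p}(M,X)$.

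Equivalently, regularity of $H$ means that the linearized operator $D_f\cF_H : W^{k,p}(f^*TX)\to W^{k-1,p}(f^*TX)$ is surjective for every $f\in\cM(H)$. Since this operator has index $0$, it is then an isomorphism, and the local structure follows directly from the implicit function theorem.

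The main obstacle is not in this deduction; it sits inside the cited input. That input is the surjectivity of the universal linearization $(\xi,h)\mapsto D_f\cF_H\xi - \nabla^{g_X}h(f)$. I would prove it by taking $\eta$ in the $L^{p'}$-cokernel and using unique continuation for the formal adjoint Dirac-type operator. Localizing $h$ near a point of the graph of $f$, one gets $\langle\eta,\nabla h(f)\rangle=0$ for all $h$, which forces $\eta=0$ on an open set and hence everywhere. Since the statement allows citing \cite{HNS09}, I would only indicate this step.
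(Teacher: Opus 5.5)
Your proposal is correct and follows the paper's argument. You take from \cite{HNS09} that $\cM^m$ is a $C^1$ Banach manifold with $p^m$ Fredholm of index $0$; Sard--Smale then gives a residual set of regular values, and the fibres over these are discrete and identify with $\cM(H)$. For the cited surjectivity input, the paper's own mechanism in Lemma~\ref{dense image} (dense image of $h\mapsto\nabla^{g_X}h(\cdot,f(\cdot))$, combined with a finite-dimensional cokernel) is simpler: it avoids the duality bookkeeping, since the annihilator of the image in $W^{k-1,p}$ lies a priori in $W^{-(k-1),p'}$ rather than $L^{p'}$, and it avoids unique continuation, which is unnecessary anyway because $h$ may depend on $x\in M$.
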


Theorem \ref{thm:B} is a more refined version of this transversality theorem in the presence of incidence conditions. We begin with some preparatory lemmas. 

To ease the notation, for the remainder of this section, we set
\begin{equation*}
  \cB \coloneq W^{k, p}(M, X) \eand \cC_n \coloneq \Conf_n(M).
\end{equation*}

\begin{lemma}\label{incidence locus}
  The incidence locus 
  \begin{equation*}
    \cI_n \coloneq \{ (f, \bx) \in \cB \times \cC_n \, | \, f(x_j) \in Z \text{ and } T_{x_j}f(v) \in T_{f(x_j)}Z \text{ for all } j\}
  \end{equation*}
  is a $C^1$ Banach submanifold of $\cB \times \cC_n$ of codimension $4n$.
\end{lemma}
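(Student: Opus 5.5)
We realise $\cI_n$ as the preimage of a submanifold under a $C^1$ map that is a submersion onto the normal directions, and then apply the implicit function theorem in Banach manifolds. Consider the evaluation map
\begin{equation*}
  \Phi: \cB \times \cC_n \to (TX)^n, \qquad (f, \bx) \mapsto \big( Tf_{x_j}(v(x_j)) \big)_{j=1}^n,
\end{equation*}
where $Tf_{x_j}(v(x_j)) \in T_{f(x_j)}X$ is regarded as a point of the total space of $TX$. Since $k - 3/p > 2$, the Sobolev embedding gives $W^{k,p}(M,X) \hookrightarrow C^2(M,X)$. Hence $f \mapsto Tf \in C^1$, and evaluating a $C^1$ map at a varying point is $C^1$ jointly in $(f, x)$. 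So $\Phi$ is $C^1$; this is exactly where two derivatives of room are needed. The target condition says $\Phi(f,\bx) \in (TZ)^n$. Since $TZ \subset TX$ is a submanifold of real codimension $2 + 2 = 4$ (codimension $2$ in the base and $2$ in the fibre), the subset $(TZ)^n \subset (TX)^n$ has codimension $4n$. Thus $\cI_n = \Phi^{-1}\big((TZ)^n\big)$.

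It then suffices to show that $\Phi$ is transverse to $(TZ)^n$. In fact we prove the stronger statement that $d\Phi_{(f,\bx)}$ restricted to the $f$-directions alone is surjective onto $\bigoplus_j T_{\Phi_j}(TX)$. Choose a tangent vector $\xi \in W^{k,p}\Gamma(f^*TX)$ supported in small disjoint coordinate balls around the distinct points $x_j$; the distinctness uses $\bx \in \Conf_n(M)$. In the connection splitting $T_{(q,w)}(TX) \cong T_qX \oplus T_qX$, the derivative of the $j$-th component is
\begin{equation*}
  \xi \mapsto \big( \xi(x_j),\ \nabla_{v}\xi(x_j) + (\text{terms linear in } \xi(x_j)) \big).
\end{equation*}
The values $\xi(x_j)$ and $\nabla_v \xi(x_j)$ can be prescribed independently and arbitrarily by smooth compactly supported sections, for instance $\xi = \chi\,(a + b\,\langle y - x_j, v\rangle)$ in a parallel frame with a cutoff $\chi$. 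This gives surjectivity, so transversality holds.

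Transversality together with the finite codimension of $(TZ)^n$ gives that the preimage is a $C^1$ submanifold of codimension $4n$. Concretely, compose $\Phi$ locally with a defining submersion $(TX)^n \supset U \to \bR^{4n}$ for $(TZ)^n$. The kernel of the composite's differential then has a finite-dimensional complement, hence is complemented, and the Banach implicit function theorem applies.

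The main technical point is the $C^1$ regularity of $\Phi$ jointly in $\bx$, because differentiating in $x_j$ costs one derivative of $Tf$. Our first check is that the $C^2$ embedding really suffices. If it does not, we would verify directly that $(f,x) \mapsto Tf(x)$ is $C^1$ from $C^2 \times M$, which reduces to continuity of $(f,x) \mapsto \nabla Tf(x)$.
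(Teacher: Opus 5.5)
Your proof is correct. Its core is the same as the paper's: variations supported in disjoint neighbourhoods of the distinct points $x_j$, whose value and $v$-derivative at $x_j$ can be prescribed independently, with $C^1$-regularity coming from $W^{k,p}\hookrightarrow C^2$. What differs is how the argument is organized.

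\textbf{The paper's route.} It works in two stages. First it uses that $\bev$ is a submersion to cut out $\cB^Z_n=\bev^{-1}(Z^n)$, which has codimension $2n$. Then it imposes tangency as the zero set of the section $\sigma_n(f,\bx)=\big(\pr^{NZ}(T_{x_j}f(v))\big)_j$ of the rank-$2n$ bundle $\bigoplus_j\ev_j^*NZ$ over $\cB^Z_n$. Transversality is checked with variations satisfying $\eta(x_j)=0$ and $\nabla^\perp_v\eta(x_j)=\zeta_j$.

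\textbf{Your route.} You pull back the single codimension-$4n$ submanifold $(TZ)^n\subset(TX)^n$ under the $1$-jet evaluation $\Phi$. You then show that $d\Phi$ is onto all of $\bigoplus_j T_{\Phi_j}(TX)$ using the $f$-directions alone.

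\textbf{Comparison.} Your version avoids building the intermediate Banach manifold $\cB^Z_n$ and the bundle over it. It also avoids having to interpret the linearization of $\sigma_n$ at a zero; the paper handles this with the normal connection and variations with $\eta(x_j)=0$, which are tangent to $\cB^Z_n$. Because you prove full surjectivity, you never need to describe $T(TZ)$ inside $T(TX)$, where the second fundamental form of $Z$ would enter. You also note explicitly that the kernel is complemented because it has finite codimension, which the paper leaves implicit. In exchange, the paper's version separates the incidence condition from the tangency condition, which makes the count $4n=2n+2n$ transparent.
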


\begin{proof}
    Consider the evaluation map
  \begin{equation*}
    \bev=(\ev_1, \dots, \ev_n): \cB \times \cC_n \to X^n: \quad \bev(f, \bx) = \Big(f(x_1), \dots, f(x_n) \Big).
  \end{equation*}
  Clearly, this is a submersion: since $x_j$ are distinct, one can prescribe the values of a section $\xi \in W^{k,p} \Gamma(f^*TX)$ independently at all $x_j$. Consequently, 
  \begin{equation*}
    \cB^Z_n \coloneq \bev^{-1}(Z^n)
  \end{equation*}
  is a Banach submanifold of $\cB \times \cC_n$ of codimension $2n$.

  Now, the metric $g_X$ gives the canonical choice of the normal bundle $NZ$ of $Z$. Over $\cB^Z_n$ define a rank $2n$ vector bundle
  \begin{equation*}
    \cV_n \coloneq \bigoplus_{j=1}^n \ev_j^* NZ.
  \end{equation*}
  Write $\bx = (x_1, \dots, x_n)$. The vector field $v\in \Gamma(TM)$ gives rise to a section $\sigma_n \in C^1\Gamma(\cB^Z_n, \cV_n)$ defined by 
  \begin{equation*}
    \sigma_n(f, \bx) = \Big(\pr^{NZ}\big(T_{x_j}f(v)\big) \Big)_{j=1}^n,
  \end{equation*}
  where the $C^1$-regularity follows by the Sobolev embedding theorem from the assumption  that $k-3/p>2$. We claim that $\sigma_n$ is transverse to the zero section. Indeed, at a zero $(f, \bx)$, take arbitrary $\zeta_j \in N_{f(x_j)}Z$ for every $j=1, \dots, n$. Clearly, one can arrange $\eta \in W^{k, p}\Gamma(f^*TX)$ supported in disjoint neighborhoods of $x_j$ such that 
  \begin{align*}
    \eta(x_j)=0 \eand \big(\nabla_v^\perp \eta\big)_{x_j} = \zeta_j,
  \end{align*}
  so that for the linearization $D\sigma_n$ at $(f, \bx)$, taken using the normal connection induced by $g_X$, we have
  \begin{equation*}
    D\sigma_n(\eta, 0) = (\zeta_1, \dots, \zeta_n).
  \end{equation*}
  Consequently, the incidence locus $\cI_n = \sigma_n^{-1}(0)$ is a $C^1$ Banach submanifold of codimension $2n$ in $\cB_n^Z$ and thus a $C^1$ Banach submanifold of codimension $4n$ in $\cB \times \cC_n$.
\end{proof}

Write 
\begin{equation*}
  \cE'_f \coloneq W^{k-1,p}\Gamma(f^*TX).
\end{equation*}
For an $H$-Fueter map $f$, denote by 
\begin{equation*}
  \cD_{f, H}: T_f \cB \to \cE'_f
\end{equation*}
the linearization of the Fueter operator at $f$. With respect to the connection induced by $g_X$,
\begin{equation*}
  \cD_{f, H} \xi = \sum_{a=1}^3 \cI(e_a) \nabla_{e_a} \xi - \nabla_{\xi} \nabla^{g_X} H(\, \cdot \, , f (\,\cdot \,)).
\end{equation*}
This is a first-order elliptic operator, hence Fredholm of index $0$.

Henceforth, we will use the notation from the statement of Theorem \ref{thm:B} and abbreviate $\cN^m_n (v, Z)$ by $\cN^m_n$. The following index computation will be useful in the proof of Theorem \ref{thm:B}.

\begin{lemma}\label{index computation}
  For $(f, \bx, H) \in \cN^m_n$, the operator  
    \begin{equation*}
      \cD: T_{(f, \bx)} \cI_n \to \cE_{f}': \quad D(\xi, \eta) = \cD_{f, H} \xi
    \end{equation*}
  is Fredholm of index $-n$.
\end{lemma}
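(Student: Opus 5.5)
The plan is to write $\cD$ as the composition of the differential of the inclusion $\cI_n \hookrightarrow \cB \times \cC_n$ with the map
\begin{equation*}
  \widehat{\cD}: T_f\cB \oplus T_{\bx}\cC_n \to \cE'_f, \qquad \widehat{\cD}(\xi, \eta) = \cD_{f,H}\xi .
\end{equation*}
We then read off the index from two facts: $\widehat{\cD}$ is Fredholm, and restricting a Fredholm operator to a closed finite-codimensional subspace keeps it Fredholm while lowering the index by that codimension.

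\emph{Index of $\widehat{\cD}$.} The operator $\widehat{\cD}$ is $\cD_{f,H}\circ \pr_1$. It equals $\cD_{f,H}\oplus 0$ on $T_f\cB \oplus T_{\bx}\cC_n$, where the zero map is defined on the finite-dimensional space $T_{\bx}\cC_n$. Since $\dim T_{\bx}\cC_n = 3n$, the kernel grows by $3n$ and the range is unchanged. Hence $\widehat{\cD}$ is Fredholm, and because $\cD_{f,H}$ has index $0$,
\begin{equation*}
  \ind \widehat{\cD} = \ind \cD_{f,H} + 3n = 3n .
\end{equation*}

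\emph{Restriction to the incidence locus.} By Lemma \ref{incidence locus}, $T_{(f,\bx)}\cI_n$ is a closed subspace of codimension $4n$ in $T_f\cB \oplus T_{\bx}\cC_n$. It is the kernel of the surjective linearization of the $C^1$ submersion $(f,\bx)\mapsto \big(\bev, \sigma_n\big)$ into the normal directions, composed in local charts. We then use the standard lemma: if $L: E\to F$ is Fredholm and $E_0\subset E$ is closed of codimension $c$, then $L|_{E_0}$ is Fredholm of index $\ind L - c$. To prove it, choose a $c$-dimensional complement $C$ with $E = E_0 \oplus C$. Then $L = L|_{E_0}\circ \pr_{E_0} + L|_C\circ\pr_C$, where the second summand has finite rank, and $L|_{E_0}\oplus 0_C$ differs from $L$ by a compact operator. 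So $L|_{E_0}\oplus 0_C$ has index $\ind L$, and $\ind L|_{E_0} = \ind L - c$. Applying this with $c = 4n$ gives
\begin{equation*}
  \ind \cD = 3n - 4n = -n .
\end{equation*}

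The only point needing care is that the tangent space of the $C^1$ submanifold $\cI_n$ is genuinely a closed subspace of codimension exactly $4n$. This is because it is the kernel of a surjection onto the $4n$-dimensional space
\begin{equation*}
  \bigoplus_j \big(N_{f(x_j)}Z \oplus N_{f(x_j)}Z\big),
\end{equation*}
which was established in Lemma \ref{incidence locus}. We do not need to identify $T_{(f,\bx)}\cI_n$ explicitly; for instance, we need not track how $\eta \in T_{\bx}\cC_n$ mixes into the constraints through $Tf(\eta_j)$, since only the codimension enters the count. Everything else is formal.
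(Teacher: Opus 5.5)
Your argument is correct and is essentially the paper's: the paper also forms the operator $(\xi,\eta)\mapsto\cD_{f,H}\xi$ on $T_f\cB\oplus T_{\bx}\cC_n$, which has index $3n$, and then subtracts the codimension $4n$ of $T_{(f,\bx)}\cI_n$ supplied by Lemma~\ref{incidence locus}. The only difference is cosmetic: the paper regards the inclusion $T_{(f,\bx)}\cI_n\hookrightarrow T_f\cB\oplus T_{\bx}\cC_n$ as a Fredholm operator of index $-4n$ and applies the composition formula, whereas you prove the restriction statement directly by a finite-rank perturbation.
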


\begin{proof}
  Consider the operator
  \begin{equation*}
    \widetilde{\cD}: T_f \cB \oplus T_{\bx} \cC_n \to \cE_{f}': \; \widetilde{\cD}(\xi, \bm{\eta}) = \cD_{f, H} \xi,
  \end{equation*}
  which is clearly Fredholm of index
  \begin{equation*}
    \ind \widetilde{\cD} = \ind \cD_{f, H}  + \dim T_{\bx} \cC_n = 3n.
  \end{equation*}
  From Lemma \ref{incidence locus}, we know that $T_{(f, \bx)}\cI_n \subset T_f \cB \oplus T_{\bx} \cC_n$ is a closed subspace of codimension $4n$. Hence the inclusion $\iota: T_{(f, \bx)}\cI_n \hookrightarrow T_f \cB \oplus T_{\bx} \cC_n$ is Fredholm of index $-4n$. Since $\cD = \widetilde{\cD} \circ \iota$, the composition formula for Fredholm operators \cite[Theorem 4.4.1]{BuehlerSalamon2018} implies that $\cD$ is Fredholm of index
  \begin{equation*}
    \ind \cD = \ind \widetilde{\cD} + \ind \iota = 3n-4n = -n. \qedhere
  \end{equation*}
\end{proof}

We now record a simple consequence of the freedom to vary the Hamiltonian.

\begin{lemma}\label{dense image}
  For every $f \in W^{k, p}(M, X)$, the operator 
  \begin{equation*}
    B_f: C^m(M \times X, \bR) \to W^{k-1, p}\Gamma(f^* TX): \; B_f(h)=\nabla^{g_X} h(\, \cdot \,, f(\, \cdot \,))
  \end{equation*}
  has dense image.
\end{lemma}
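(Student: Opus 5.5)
The plan is to show that every smooth section $\xi \in C^\infty\Gamma(f^*TX)$ can be written exactly as $B_f(h)$ for some $h \in C^\infty(M\times X,\bR) \subset C^m(M\times X,\bR)$. Since smooth sections are dense in $W^{k-1,p}\Gamma(f^*TX)$, the image of $B_f$ is then dense. The key observation is that $B_f(h)$ only involves $h$ near the graph of $f$. Along the graph, the gradient of $h$ in the $X$-direction can be prescribed arbitrarily by choosing $h$ to be linear in normal coordinates in the fibers.

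Concretely, by the Sobolev embedding (since $k-3/p>2$), $f$ is $C^2$, and so its graph $\Gamma_f = \{(x,f(x))\}\subset M\times X$ is a compact $C^2$ submanifold. To keep $h$ smooth, I would first smooth $f$. Choose a smooth map $\tilde f: M \to X$ that is $C^0$-close to $f$, say $d_{g_X}(f(x),\tilde f(x)) < \rho/4$, where $\rho$ is less than the injectivity radius of the compact manifold $X$.

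Given smooth $\xi$, define
\begin{equation*}
h(x,y) \coloneq \chi\big(d(\tilde f(x),y)\big)\, \big\langle \tilde\xi(x), \exp_{\tilde f(x)}^{-1}(y)\big\rangle ,
\end{equation*}
where $\chi$ is a cutoff equal to $1$ on $[0,\rho/2]$ and supported in $[0,\rho)$. The vector $\tilde\xi(x) \in T_{\tilde f(x)}X$ is chosen so that the gradient at $y=f(x)$ equals $\xi(x)$. Since $y = f(x)$ lies where $\chi\equiv 1$, we have
\begin{equation*}
\nabla^{g_X}_y h(x,f(x)) = \big(d\exp^{-1}_{\tilde f(x)}\big|_{f(x)}\big)^{*}\tilde\xi(x).
\end{equation*}
For $\rho$ small, the linear map $\big(d\exp^{-1}_{\tilde f(x)}\big|_{f(x)}\big)^{*}$ is an isomorphism $T_{\tilde f(x)}X \to T_{f(x)}X$. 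It depends continuously on $x$, and with the same regularity as $f$, namely $W^{k,p}$. So I would set $\tilde\xi(x)$ to be its inverse applied to $\xi(x)$. The trouble is that $\tilde\xi$ is then only $W^{k,p}$-regular in $x$, and hence $h$ is not smooth in $x$.

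The cleanest fix is to aim only for approximation. First approximate $\tilde\xi$ by a smooth section $\tilde\xi_\epsilon$ of $\tilde f^*TX$ in the $W^{k-1,p}$ norm, and let $h_\epsilon$ be the corresponding function, which is smooth. Then
\begin{equation*}
B_f(h_\epsilon) - \xi = \Phi\,(\tilde\xi_\epsilon - \tilde\xi),
\end{equation*}
where $\Phi(x) = \big(d\exp^{-1}_{\tilde f(x)}\big|_{f(x)}\big)^{*}$. Multiplication by $\Phi$ is bounded on $W^{k-1,p}$ because $\Phi \in W^{k,p}\subset C^0$ and $W^{k,p}$ is an algebra (as $kp>3$). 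Hence $B_f(h_\epsilon)\to\xi$ in $W^{k-1,p}$. Combined with the density of smooth sections, this proves the lemma.

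The main obstacle is exactly this regularity bookkeeping. The graph of $f$ is only $W^{k,p}$, while $h$ must be $C^m$ with $m$ possibly exceeding the regularity of $f$. The smoothed base map $\tilde f$, together with the algebra and multiplication property of $W^{k,p}$, handles this.
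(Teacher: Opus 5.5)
Your argument is correct, but it follows a different route from the paper's.

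\textbf{The paper's route.} The paper embeds $X$ isometrically in $\bR^N$ and uses perturbations linear in the ambient coordinates, $h_l(x,y)=\langle a_l(x),\iota(y)\rangle$, where $a_l\in C^\infty(M,\bR^N)$ approximates $f^*T\iota(s)$. Then $T\iota\big(B_f(h_l)\big)=\Pi_f\,a_l$, so all the non-smooth dependence on $f$ sits in the $W^{k,p}$ projection $\Pi_f$. Convergence follows from the multiplication property $W^{k,p}\times W^{k-1,p}\to W^{k-1,p}$, and one returns to $f^*TX$ via the adjoint $(f^*T\iota)^*$.

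\textbf{Your route.} You smooth $f$ to $\tilde f$ and build $h$ intrinsically from $\exp^{-1}_{\tilde f(x)}$ and a cutoff. The non-smoothness is moved into the comparison isomorphism $\Phi(x)=G(\tilde f(x),f(x))$, where $G$ is a smooth bundle map near the diagonal of $X\times X$. This needs one fact you only use implicitly: $\Phi^{-1}$ must also be a bounded $W^{k,p}$ multiplier, so that $\tilde\xi=\Phi^{-1}\xi\in W^{k-1,p}$. It is, being again a smooth function of $(\tilde f(x),f(x))$.

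\textbf{Comparison.} The paper's version is shorter: it needs no smoothing of $f$, no injectivity radius or cutoff, and no inversion. Its perturbations are globally defined and automatically smooth. Yours avoids the Nash embedding and produces perturbations supported in a small tubular neighbourhood of the graph of $\tilde f$. That localization can be useful, for instance if one wants Hamiltonian perturbations supported near a given map.

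\textbf{Minor point.} The phrase ``smooth section of $f^*TX$'' is not really meaningful when $f$ is only $W^{k,p}$. You do not need it: your approximation step works verbatim for an arbitrary $\xi\in W^{k-1,p}\Gamma(f^*TX)$, so the final appeal to density of smooth sections can simply be dropped.
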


\begin{proof}
  Let $\iota: X \hookrightarrow \bR^N$ be an isometric embedding for some $N$ large enough. This embedding induces a universal projection map
  \begin{equation*}
    \Pi: X \to \End (\bR^N): \; \Pi(y) = \pr_{T_y \iota (T_y X)}
  \end{equation*}
  which is clearly smooth so that 
  \begin{equation*}
    \Pi_f \coloneq \Pi \circ f \in W^{k,p}(M, \End (\bR^N)).
  \end{equation*}
  If $s \in W^{k-1, p}\Gamma(f^*TX)$, then $f^*T\iota(s)\in W^{k-1, p}(M, \bR^N)$. This means that there is a sequence $(a_l)\in C^\infty(M, \bR^N)^\bN$ with 
  \begin{equation}\label{defining property of a_l}
    a_l  \to f^*T\iota(s) \quad \text{ in } W^{k-1, p}(M, \bR^N).
  \end{equation}
  Now, for every $l \in \bN$, define $h_l \in C^\infty(M\times X, \bR)$ by
  \begin{equation*}
    h_l(x, y) \coloneq \langle a_l(x), \iota(y) \rangle_{\bR^N_{\text{std}}}.
  \end{equation*}
  Since the embedding $\iota$ was chosen to be isometric, it holds that for all $x \in M$
  \begin{equation}\label{key property of h_l}
    T_{f(x)} \iota (\nabla^{g_X} h_l (x, f(x))) = \Pi_f (x) \cdot a_l(x).
  \end{equation} 
  Because $k-3/p>2$, the multiplication
  \begin{equation*}
    W^{k, p}(M, \End(\bR^N)) \times W^{k-1, p}(M, \bR^N) \to W^{k-1, p}(M, \bR^N)
  \end{equation*}
  is continuous, so that the combination of \eqref{defining property of a_l} and \eqref{key property of h_l}
  \begin{equation}\label{key convergence 1}
    f^*T \iota \Big( \nabla^{g_X} h_l( \, \cdot \, , f(\, \cdot \, ))\Big) \to f^*T \iota (s) \quad \text{ in } W^{k-1, p}(M, \bR^N).
  \end{equation}
  Since the bundle map 
  \begin{equation*}
    f^*T \iota: f^*TX \to \underline{\bR}^N
  \end{equation*}
  and its metric adjoint
  \begin{equation*}
    (f^*T \iota)^*: \underline{\bR}^N \to f^*TX
  \end{equation*} 
  have $W^{k,p}$ coefficients, by the Sobolev multiplication theorem they induce bounded maps on $W^{k-1,p}$. Now, as
  \begin{equation*}
    (f^*T \iota)^* (f^*T \iota )= \id_{f^*TX},
  \end{equation*} 
  we deduce from \eqref{key convergence 1} that 
  \begin{equation*}
    \nabla^{g_X} h_l( \, \cdot \, , f(\, \cdot \, )) \to s \quad \text{ in } W^{k-1, p}\Gamma(f^*TX),
  \end{equation*}
  proving the claim.
\end{proof}

We can now assemble the preceding lemmas to prove Theorem \ref{thm:B}. The only remaining point is to apply the implicit function theorem to the universal Fueter section and compute the index of the projection.

\begin{proof}[Proof of Theorem \ref{thm:B}]
  Let $\cE' \to \cB$ be a Banach bundle with fiber $\cE'_f = W^{k-1, p}\Gamma(f^*TX)$ and let
  \begin{equation*}
    \cE \to \cI_n \times \cP^m
  \end{equation*}
  be the pull back of $\cE'$ to $\cI_n \times \cP^m$ via the projection map. Consider the section $\fF_n \in C^1 \Gamma(\cE)$ defined by
  \begin{equation*}
    \cF_n(f, \bx, H) \coloneq \cF_H f,
  \end{equation*}
  where the $C^1$-regularity follows from the assumption $m\geq k+1$. Now observe that
  \begin{equation*}
    \cN^m_n = \fF^{-1}_n(0).
  \end{equation*}
  At a zero $(f, \bx, H)$, the linearization of $\fF_n$ with respect to the connection on $f^*TX$ induced by $g_X$ is
  \begin{equation*}
    D\fF_n: T_{(f, \bx)} \cI_n \oplus T_H \cP^m \to \cE'_f
  \end{equation*}
  given by
  \begin{equation*}
    D\fF_n(\xi, \bm{\eta}, h) = \cD(\xi, \bm{\eta}) - B_f(h),
  \end{equation*}
  where $\cD$ is the operator from Lemma \ref{index computation} and $B_f$ is the operator from Lemma \ref{dense image}.
  
  We first claim that $D\fF_n$ is surjective. Let 
  \begin{equation*}
    q: \cE'_f \to \coker \cD
  \end{equation*}
  denote the quotient map. By Lemma \ref{dense image}, $B_f$ has dense image, and hence $q \circ B_f$ has dense image in $\coker \cD$. By Lemma \ref{index computation}, $\coker \cD$ is finite-dimensional, and therefore 
  \begin{equation*}
    q \circ B_f : T_H \cP^m \to \coker \cD
  \end{equation*}
  is surjective. Consequently
  \begin{equation*}
    \cE'_f = \im \cD + \im B_f = \im D\fF_n,
  \end{equation*}
  which proves surjectivity.

  It is not difficult to see that $D\fF_n$ has complemented kernel: since $\cD$ is Fredholm, choose a closed complement
  \begin{equation*}
    T_{(f, \bx)} \cI_n = \ker \cD \oplus V.
  \end{equation*}
  Since $q \circ B_f$ is surjective and $\coker \cD$ is finite-dimensional, we can choose a finite-dimensional subspace $W \subset T_H \cP^m$ such that
  \begin{equation*}
    q \circ B_f |_{W} : W \to \coker \cD
  \end{equation*}
  is an isomorphism. With this choice $D\fF_n|_{V \oplus W}: V \oplus W \to \cE_f'$ is an isomorphism, too, so that $D\fF_n$ admits a bounded right inverse. Finally, we use the implicit function theorem to deduce that $\cN^m_n$ is a $C^1$ Banach manifold. 

  It remains to compute the index of the projection $\pi^m: \cN^m_n \to \cP^m$. Applying the snake lemma to the commutative diagram 
  \[\begin{tikzcd}
    0 & {T_{(f, \bx)}\cI_n} & {T_{(f, \bx)}\cI_n \oplus T_{H} \cP^m} & {T_{H} \cP^m} & 0 \\
    0 & {\cE'_f} & {\cE'_f} & 0 & 0
    \arrow[from=1-1, to=1-2]
    \arrow[from=1-2, to=1-3]
    \arrow["\cD", from=1-2, to=2-2]
    \arrow[from=1-3, to=1-4]
    \arrow["{D\fF_n}", from=1-3, to=2-3]
    \arrow[from=1-4, to=1-5]
    \arrow["0", from=1-4, to=2-4]
    \arrow[from=2-1, to=2-2]
    \arrow[equals, from=2-2, to=2-3]
    \arrow[from=2-3, to=2-4]
    \arrow[from=2-4, to=2-5]
  \end{tikzcd}\]
  yields
  \begin{equation*}
    0 \longrightarrow \ker \cD \longrightarrow \ker D\fF_n \longrightarrow T_H \cP^m \longrightarrow \coker \cD \longrightarrow \coker D\fF_n \longrightarrow 0.
  \end{equation*}
  Since $T_{(f, \bx, H)} \cN^m_n  = \ker D\fF_n$, this exact sequence can be identified with 
  \begin{equation*}
    0 \longrightarrow \ker \cD \longrightarrow T_{(f, \bx, H)} \cN^m_n \xlongrightarrow{T\pi^m} T_H \cP^m \longrightarrow \coker \cD \longrightarrow 0,
  \end{equation*}
  from which we immediately conclude that $T\pi^m$ is Fredholm with the kernel and the cokernel identified respectively with the kernel and the cokernel of $\cD$. In particular, Lemma \ref{index computation} implies that
  \begin{equation*}
    \ind T\pi^m = \ind \cD = -n. \qedhere
  \end{equation*}
\end{proof}
\section{Fredholm rectifiability and generic avoidance}

The remaining ingredient in the proof of the main theorem is the generic avoidance of Theorem \ref{thm:C}. Although this result is a rather straightforward application of standard arguments in transversality theory, we could not locate its statement in the literature.

The following lemma relates Fredholm rectifiable subsets of Banach spaces to small subsets of more general topological vector spaces. 

\begin{lemma}\label{Banach to Hausdorff}
  Let $V$ be a Hausdorff topological vector space, $W$ a Banach space and $j: V \to W$ a continuous linear map with dense image. Suppose that $B$ is a second-countable Banach manifold and $F: B \to W$ is a $C^1$-Fredholm map of negative index. Then $V \setminus j^{-1}(F(B))$ is comeager in $V$. 
\end{lemma}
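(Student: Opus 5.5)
The plan is to reduce to a local statement via the Fredholm local form. Fix $b \in B$. Since $F$ is $C^1$-Fredholm, $dF_b$ has a closed range of finite codimension $c$, and $\dim\ker dF_b = d$ with $d - c = \ind F < 0$. Choose a closed complement $W_0$ of $\im dF_b$, so that $\dim W_0 = c$, and write $W = \im dF_b \oplus W_0$. Let $P \colon W \to W_0$ be the corresponding projection. The local representation theorem for Fredholm maps (implicit function theorem applied to $(\id - P)\circ F$) gives a neighbourhood $U_b$ of $b$ with the following property: $F(U_b)$ is contained in the graph-like set
\begin{equation*}
  \{\, w \in W \;|\; P w = g\big((\id-P)w,\, \kappa\big) \text{ for some } \kappa \in \bR^d \text{ with } |\kappa|<1 \,\},
\end{equation*}
for a $C^1$ map $g$. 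In words, near $b$ the image is a $d$-parameter family of $C^1$ graphs over a codimension-$c$ subspace. Shrinking $U_b$, we may take its closure $\bar U_b$ so that $F(\bar U_b)$ is closed in a neighbourhood of $F(b)$. It is cleaner still to use closed balls and properness of Fredholm maps on small closed balls, which is a standard fact. Second countability lets us cover $B$ by countably many such $U_{b_i}$ with $F|_{\bar U_{b_i}}$ proper, hence with closed image.

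The claim then follows once each $j^{-1}(F(\bar U_{b_i}))$ is shown to be closed and nowhere dense in $V$, because a countable union of nowhere dense sets is meagre. Closedness comes from continuity of $j$ and closedness of $F(\bar U_{b_i})$. For the nowhere density, take an open $O \subset V$ and $v \in O$. We need $v' \in O$ with $j(v') \notin F(\bar U_{b_i})$. Density of $j(V)$ together with $\dim W_0 = c$ gives vectors $u_1,\dots,u_c \in V$ such that $P j(u_1),\dots,Pj(u_c)$ span $W_0$. Such vectors exist because $P\circ j$ has dense, hence full, image in the finite-dimensional space $W_0$. Consider the finite-dimensional affine slice
\begin{equation*}
  \phi \colon \bR^c \to W, \qquad \phi(t) = j\Big(v + \sum_l t_l u_l\Big).
\end{equation*}
Its preimage $\phi^{-1}(F(U_{b_i}))$ lies in the set of $t$ with $P\phi(t) = g\big((\id-P)\phi(t),\kappa\big)$ for some $\kappa$. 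Since $P\phi(t) = P j(v) + L t$ with $L$ an isomorphism, this becomes a fixed-point equation $t = G(t,\kappa)$ with $G$ of class $C^1$. Its solution set is the image of a $C^1$ map from a $d$-dimensional parameter space, up to a shrinking that makes $G$ a contraction in $t$ (obtained by arranging $dg$ small at the base point after a linear change). Because $d < c$, this image has Lebesgue measure zero in $\bR^c$ by the easy half of Sard's theorem. Hence there are arbitrarily small $t$ with $\phi(t) \notin F(U_{b_i})$, and $v + \sum t_l u_l \in O$ for small $t$ by continuity of the vector space operations in $V$.

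The main obstacle is making the local graph description uniform enough. The contraction argument needs $g$ to have small derivative in the $W_0$-direction and needs $\phi(t)$ to stay in the chart's domain. If that is awkward, a cleaner alternative applies Sard--Smale directly. The map $\Phi \colon B \times \bR^c \supset \cdots$ given by $(b',t) \mapsto$ (solve $F(b') = \phi(t)$) is treated via the fibre product $\{(b',t) \,|\, F(b') = \phi(t)\}$, which is a finite-dimensional $C^1$ manifold of dimension $d$ whenever $F$ and $\phi$ are transverse. Transversality holds automatically near $b$ because $P\circ d\phi$ is onto $W_0$. The projection of this fibre product to $\bR^c$ then has measure-zero image since $d<c$, and it is $C^1$, so no higher smoothness is needed for Sard. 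I would adopt this fibre-product version in the write-up, restricting to the compact pieces $\bar U_{b_i}$ to keep closedness.
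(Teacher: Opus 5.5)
Your argument is correct, but it is organized differently from the paper's.

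\textbf{The paper's route.} The paper takes arbitrary closed pieces $B_l$ on which $F$ is proper. It shows $A_l=j^{-1}(F(B_l))$ has empty interior one point at a time. At $v_0\in A_l$ it uses compactness of the fibre $Z_0=\{b\in B_l \mid F(b)=j(v_0)\}$ to find a single finite-dimensional $E\subset V$ with $j(E)+\im T_bF=W$ on a neighbourhood $O$ of $Z_0$. It then applies Sard to the $(\dim E+\ind F)$-dimensional manifold $\Psi^{-1}(0)$. Finally it uses properness a second time, to confine all solutions $b\in B_l$ of $F(b)=j(v_0+e)$, for small $e$, to $O$.

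\textbf{Your route.} You make the pieces so small (closed chart balls around $b_i$) that one slice $\mathrm{span}\{u_1,\dots,u_c\}$, chosen at the centre, is transverse to $F$ on the whole piece. The fibre product over the piece is then $d$-dimensional with $d=\dim\ker dF_{b_i}<c$, and properness is used only for closedness.

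\textbf{Comparison.} Your version drops the compactness-of-fibre and confinement steps. It also gives a uniform statement: measure zero along the slice through every $v\in V$, not only $v\in A_l$. The price is that you need transversality uniformly over each piece, whereas the paper only needs it near a compact fibre.

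\textbf{Point to fix in the write-up.} Choose $u_1,\dots,u_c$ first; they depend only on $b_i$, not on $v$. Only then choose $\bar U_{b_i}$ inside the open set of $b'$ where $dF_{b'}\oplus d\phi$ is onto. Transversality merely ``near $b$'' does not control all of $\bar U_{b_i}$, and if you fix the ball first, an arbitrary spanning family need not be transverse on the whole ball. Also, $\bar U_{b_i}$ is not compact; what you actually use is that $F|_{\bar U_{b_i}}$ is proper.
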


\begin{proof}
  Pick a countable closed cover $\{ B_l \}$ of $B$ such that for all $l$ the map $F|_{B_l}$ is proper. This can be arranged since by an elementary application of the inverse function theorem, $F$ can be locally put in a form $F(x, k) = (x, f(x, k))$, where $k$ is in a finite-dimensional vector space; the properness follows by restricting $k$ to a compact subset. We will show that $A_l \coloneq j^{-1}(F(B_l))$ is nowhere dense. 

  Since $F|_{B_l}$ is proper, $F(B_l)$ is closed and so is $A_l$. It therefore remains to show that $A_l$ has empty interior. Suppose that $v_0 \in A_l$ and let $U$ be its open neighborhood. Set
  \begin{equation*}
    Z_0 \coloneq \{b \in B_l \, | \, F(b)=j(v_0) \}.
  \end{equation*}

  Note that since $Z_0$ is compact, $j(V)$ is dense and $F$ is a Fredholm map, there is an open neighborhood $O$ of $Z_0$ and a finite dimensional subspace $E \subset V$ such that 
  \begin{equation*}
    j(E) + \im T_b F = W \quad \text{for all } b \in O.
  \end{equation*}
  Now, consider
  \begin{equation*}
    \Psi: E \times O \to W: \quad \Psi(e, b) = F(b) - j(v_0+e).
  \end{equation*}
  This is a $C^1$ submersion by design, and hence the set 
  \begin{equation*}
    \cZ \coloneq \Psi^{-1}(0)
  \end{equation*}
  is a finite-dimensional $C^1$ manifold of dimension $\dim E + \ind TF < \dim E$. Consequently, by Sard's theorem the image of the projection 
  \begin{equation*}
    \pi: \cZ \to E: \quad \pi(e, b) = e
  \end{equation*}
  has empty interior. 

  Since $F|_{B_l}$ is proper, for all sufficiently small $e \in E$ (since $V$ is Hausdorff, the induced topology on $E$ is the one of a usual Euclidean space) every solution $b \in B_l$ of
  \begin{equation*}
    F(b) = j(v_0 +e)
  \end{equation*}
  must satisfy $b \in O$. Now, we can choose $e \in E$ arbitrarily small with $v_0 + e \in U$ such that $e \notin \im \pi$, which shows that $A_l$ has empty interior. Consequently, $j^{-1}(F(B)) = \bigcup_l A_l$ is meager.
\end{proof}

The following is a generic avoidance result in the Banach space context and is a straightforward consequence of the transversality theory of Fredholm maps as developed in \cite[Chapter 4]{AR67}.

\begin{prop}\label{Generic avoidance for Banach}
  Let $K$ be a compact smooth manifold of dimension $k$ and let $\cR \subset W$ be a codimension $n$ Fredholm $C^1$-rectifiable subset of a Banach space $W$. If $k < n$, then the exceptional locus
  \begin{equation*}
    \cE_K (\cR) \coloneq \{ f \in C^1(K, W) \, | \, f(K) \cap \cR \neq \emptyset \}
  \end{equation*}
  is Fredholm $C^1$ rectifiable of codimension $n-k$ in $C^1(K, W)$.
\end{prop}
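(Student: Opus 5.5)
By hypothesis, $\cR \subset \bigcup_l F_l(B_l)$, where each $F_l: B_l \to W$ is a $C^1$-Fredholm map of index $\le -n$ on a second-countable Banach manifold. Since a countable union of countable unions is countable, it suffices to treat a single $F: B \to W$ of index $-n' \le -n$. Let
\begin{equation*}
  \cE_K(F) \coloneq \{ f \in C^1(K, W) \, | \, f(K) \cap F(B) \neq \emptyset \}.
\end{equation*}
The plan is to show that $\cE_K(F)$ is the image of a single $C^1$-Fredholm map of index $k - n' \le -(n-k)$.

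Consider the incidence space
\begin{equation*}
  \widetilde{\cE} \coloneq \{ (f, t, b) \in C^1(K, W) \times K \times B \, | \, f(t) = F(b) \}.
\end{equation*}
It is the zero set of the map
\begin{equation*}
  \Phi: C^1(K, W) \times K \times B \to W, \qquad \Phi(f, t, b) = f(t) - F(b).
\end{equation*}
The first step is to check that $\Phi$ is $C^1$ and a submersion. The derivative in the $f$-direction is evaluation, $\hat f \mapsto \hat f(t)$. This map is surjective onto $W$, since constant maps already hit every vector, and it has a complemented kernel: the constant maps give a complement. Hence $\widetilde{\cE}$ is a $C^1$ Banach submanifold, and it is second countable whenever $C^1(K,W)$ is. If $W$ is not separable, I will instead work in a separable closed subspace containing the relevant images, or else note that Definition \ref{Banach rectifiability} only needs second countability of $B$ after localizing.

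The main subtlety is the regularity of the evaluation map $(f,t) \mapsto f(t)$ on $C^1(K,W)$. It is differentiable in $t$ with derivative $T_t f$, but $(f,t) \mapsto T_t f$ is only continuous, so $\Phi$ is exactly $C^1$ and no better. That is the regularity we need. If one wants more room, one can replace $C^1(K,W)$ by $C^2(K,W)$ and then use density; I will first try to make the $C^1$ statement work directly.

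Next, compute the index of the projection
\begin{equation*}
  P: \widetilde{\cE} \to C^1(K, W), \qquad (f,t,b) \mapsto f.
\end{equation*}
The tangent space is
\begin{equation*}
  T\widetilde{\cE} = \{ (\hat f, \tau, \beta) \, | \, \hat f(t) + T_t f(\tau) = T_b F(\beta) \}.
\end{equation*}
Run the same snake-lemma argument as in the proof of Theorem \ref{thm:B}, with the commutative diagram built from the operator
\begin{equation*}
  T K \oplus T B \to W, \qquad (\tau, \beta) \mapsto T_t f(\tau) - T_b F(\beta).
\end{equation*}
This operator is Fredholm of index $k - n'$, because $-T_bF$ is Fredholm of index $-n'$ and $T_t f$ is a finite-rank perturbation defined on a $k$-dimensional summand. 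The snake lemma identifies the kernel and cokernel of $TP$ with those of this operator. Hence $P$ is $C^1$-Fredholm of index $k - n' \le -(n-k)$.

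Its image is exactly $\cE_K(F)$. Taking the union over $l$ exhibits $\cE_K(\cR)$ as a subset of a countable union of images of $C^1$-Fredholm maps of index at most $-(n-k)$, which is the claim. The main obstacle I expect is bookkeeping rather than a conceptual gap, namely:
\begin{itemize}
  \item verifying that $\Phi$ is genuinely $C^1$ on the product with the Banach manifold $B$;
  \item handling the second-countability requirement of Definition \ref{Banach rectifiability} for $\widetilde{\cE}$.
\end{itemize}
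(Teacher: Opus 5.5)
Your proposal is correct and follows the paper's proof: your $\widetilde{\cE}$ is exactly the fiber product $(K \times C^1(K,W)) \times_W B_i$ over the $C^1$ submersion $\ev$, and the projection to $C^1(K,W)$ is Fredholm of index $\dim K + \ind TF_i \le k-n$. Both bookkeeping worries resolve as you hoped. $\ev$ is genuinely $C^1$, since $t \mapsto \ev_t$ is Lipschitz in operator norm and $(f,t) \mapsto T_t f$ is continuous by uniform continuity of $Tf$ on compact $K$. Second countability holds because Definition \ref{Banach rectifiability} already presupposes a second-countable, hence separable, $W$, so $C^1(K,W)$ is separable.
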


\begin{proof}
  Write $\cR \subset \bigcup_i F_i (B_i)$, where $\ind TF_i \leq -n$. The evaluation map 
  \begin{equation*}
    \ev: K \times C^1(K, W) \to W: \; \ev(x, u) = u(x)
  \end{equation*}
  is a $C^1$ submersion. Consequently, the fiber product 
  \begin{equation*}
    \cI_i \coloneq \big(K \times C^1(K, W)\big) \times_W B_i
  \end{equation*}
  is a $C^1$ Banach manifold \cite[Corollary 17.2]{AR67}. The projection map 
  \begin{equation*}
    \pi_i: \cI_i \to C^1(K, W)
  \end{equation*}
  is Fredholm of index 
  \begin{equation*}
    \ind T \pi_i = \dim K + \ind TF_i \leq k-n,
  \end{equation*}
  which implies the result in view of 
  \begin{equation*}
    \cE_k(\cR) \subset \bigcup_i \pi_i (\cI_i). \qedhere
  \end{equation*}
\end{proof}

\begin{proof}[Proof of Theorem \ref{thm:C}]
  This is an immediate consequence of Lemma \ref{Banach to Hausdorff} and Proposition \ref{Generic avoidance for Banach}.
\end{proof} 
\bibliographystyle{plain}  
\bibliography{references}
\end{document}